\newtheorem{thm}{Theorem}[section]
\newtheorem{prop}[thm]{Proposition}
\newtheorem{cor}[thm]{Corollary}
\newtheorem{que}[thm]{Question}
\theoremstyle{remark}
\newtheorem{rem}[thm]{Remark}
\newtheorem{defn}{Definition}[section]
\newtheorem{ex}[thm]{Example}
\newcommand{\Q}{\mathbb{Q}}
\newcommand{\R}{\mathbb{R}}
\newcommand{\Z}{\mathbb{Z}}
\newcommand{\Out}{\mathrm{Out}}
\newcommand{\Aut}{\mathrm{Aut}}
\newcommand{\Sym}{\mathrm{Sym}}
\newcommand\id{\operatorname{id}}
\newcommand\im{\operatorname{im}}
\title[Anosov diffeomorphisms of products I]
      {Anosov diffeomorphisms of products I. \\Negative curvature and rational homology spheres}
\author{Christoforos Neofytidis}
\address{Section de Math\'ematiques, Universit\'e de Gen\`eve, 2-4 rue du Li\`evre, Case postale 64, 1211 Gen\`eve 4, Switzerland}
\email{Christoforos.Neofytidis@unige.ch}
\date{\today}
\subjclass[2010]{37D20, 55R10, 57R19, 55M05, 55N45, 55R37}
\keywords{Anosov diffeomorphism, direct product, negative curvature, rational homology sphere}
\begin{document}

\begin{abstract}
We show that various classes of products of manifolds do not support transitive Anosov diffeomorphisms. Exploiting the Ruelle-Sullivan cohomology class, we prove that the product of a negatively curved manifold with a rational homology sphere does not 
support transitive Anosov diffeomorphisms. We extend this result to products of finitely many negatively curved manifolds of dimensions at least three with a rational homology sphere that has vanishing simplicial volume. As an application of this study, we obtain new examples of manifolds  
that do not support transitive Anosov diffeomorphisms, including certain manifolds with non-trivial higher homotopy groups and certain products of aspherical manifolds. 
\end{abstract}

\maketitle

\section{Introduction}

A diffeomorphism $f$ on a closed oriented smooth $n$-dimensional manifold $M$ is called {\em Anosov} if there exist constants $\mu\in (0,1)$ and $C > 0$, together with a $df$-invariant splitting $TM=E^s\oplus E^u$ of the tangent bundle of $M$, such that for all $m\geq 0$
\begin{equation*}
\begin{split}
\|df^m(v)\| \leq C\mu^m\|v\|,\ v \in E^s,\\  
\|df^{-m}(v)\| \leq C\mu^m\|v\|,\ v \in E^u.
\end{split}
\end{equation*}

The invariant distributions $E^s$ and $E^u$ are called the {\em stable} and {\em unstable} distributions. An Anosov diffeomorphism $f$ is called a {\em codimension $k$ Anosov diffeomorphism} if either fiber $E^s$ or $E^u$ has dimension $k$ with $k \leq [n/2]$, and is called {\em transitive} if there exists a point whose orbit is dense in $M$.

All examples of Anosov diffeomorphisms known to date are conjugate to affine automorphisms of infranilmanifolds, and it is a long-standing question, going back to Anosov and Smale, whether there exist any other manifolds that support Anosov diffeomorphisms. In particular, Smale suggested that every manifold which supports an Anosov diffeomorphism must be covered by a Euclidean space~\cite{Smale}. 
Obstructions to the existence of (transitive) Anosov diffeomorphisms have been developed mainly using co-homological and coarse geometric methods. In the following theorem we gather  
some of those obstructions 
and some major examples of manifolds that do not support (transitive) Anosov diffeomorphisms: 

\begin{thm}
\label{results} \
\begin{itemize}
\item[(a)] {\normalfont(Franks \cite{Fr}, Newhouse \cite{N}).} If a manifold supports a codimension one Anosov diffeomorphism, then it is homeomorphic to a torus.
\item[(b)] {\normalfont(Ruelle-Sullivan \cite{RS}).} If $M$ supports a codimension $k$ transitive Anosov diffeomorphism $f$ with orientable invariant distributions, then there is a non-trivial cohomology class $\alpha\in H^k(M;\R)$ and a positive $\lambda\neq 1$ such that $f^*(\alpha) = \lambda\cdot\alpha$. In particular,  $H^k(M;\R)\neq 0$.
\item[(c)] {\normalfont(Shiraiwa \cite{Sh}).} If $f\colon M\longrightarrow M$ is an Anosov diffeomorphism, then there is some $k$ such that the induced homomorphism $f^*\colon H^k(M;\Q)\longrightarrow H^k(M;\Q)$ is not the identity. In particular, rational homology spheres do not support Anosov diffeomorphisms.
\item[(d)] {\normalfont(Yano \cite{Yano}).} A manifold with negative sectional curvature does not support transitive Anosov diffeomorphisms.
\end{itemize}
\end{thm}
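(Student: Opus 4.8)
Since Theorem~\ref{results} merely assembles theorems of Franks, Newhouse, Ruelle--Sullivan, Shiraiwa and Yano, the genuine ``proof'' is the list of citations already attached to the items; below I indicate how I would reconstruct each, treating (b) and (c) in some detail and deferring (a) and (d) to the sources. The one overarching difficulty is that there is no unified argument: each item needs its own mechanism, and (d) in particular rests on a vanishing property that I would not try to reprove here.

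\emph{Part (b) (the Ruelle--Sullivan class).} Write $k=\dim E^u$ and use the hypothesis that $E^s,E^u$ are orientable. The plan is to build a closed $k$-current out of the unstable foliation. Transitivity provides an ergodic $u$-Gibbs (SRB) measure $\mu$ whose conditional measures along unstable manifolds are smooth; equivalently, $\mathcal F^u$ carries a holonomy-invariant transverse measure. Given a smooth $k$-form $\omega$ on $M$, integrate it over the unstable plaques inside each flow box and average the resulting plaque-integrals against the transverse measure: this defines a $k$-current $C$, and $dC=0$ because leaves have no boundary and the transverse measure is invariant, so $\alpha:=[C]\in H^k(M;\R)$ is well defined. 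Pairing $C$ with a closed $k$-form that restricts to a normal volume form on a single flow box shows $\alpha\neq0$. Finally $df$ expands $E^u$ with a $\mu$-almost-everywhere constant (by ergodicity) Jacobian factor $\lambda>1$, so $f_*C=\lambda^{\pm1}C$, i.e.\ $f^*\alpha=\lambda^{\pm1}\alpha$; replacing $f$ by $f^{-1}$ if needed yields the stated positive $\lambda\neq1$. The delicate point is exactly that ``integrate over plaques, average transversally'' defines an honest \emph{closed} de Rham current despite $\mathcal F^u$ being only H\"older; this is the technical heart of \cite{RS}.

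\emph{Part (c) (Shiraiwa).} The plan is the Lefschetz fixed point formula applied to every iterate. Suppose, for contradiction, that $f^*=\id$ on $H^k(M;\Q)$ for all $k$; this persists under passing to a power, so we may assume $E^s$ and $E^u$ are oriented and $df$ preserves both orientations. Since every periodic point of an Anosov diffeomorphism is hyperbolic, each $x\in\mathrm{Fix}(f^m)$ has Lefschetz index $\operatorname{sign}\det(I-d_xf^m)=(-1)^{\dim E^u}$ --- the determinant splits over $E^s\oplus E^u$ into a positive factor on $E^s$, the factor $(-1)^{\dim E^u}$, and a positive factor on $E^u$ --- so $|L(f^m)|=\#\mathrm{Fix}(f^m)$, which grows exponentially along an arithmetic progression of $m$ because an Anosov diffeomorphism has positive topological entropy ($E^u\neq0$ is uniformly expanded). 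But $L(f^m)=\sum_k(-1)^k\operatorname{tr}(f^{*m}\mid H^k(M;\Q))=\chi(M)$ would be constant under our hypothesis --- a contradiction. Hence $f^*\neq\id$ on some $H^k(M;\Q)$. For the ``in particular'': on a rational homology sphere $M^n$ only $H^0$ and $H^n$ are nonzero over $\Q$, and after replacing $f$ by $f^2$ (so $\deg f=1$) $f^*$ is the identity on both; then $L(f^m)$ is bounded while the above forces it to be unbounded, so $M$ carries no Anosov diffeomorphism.

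\emph{Parts (a) and (d).} For (a): after a finite cover orienting the one-dimensional and the codimension-one invariant distributions, the unstable manifolds form a codimension-one foliation without holonomy; structural stability together with Reeb-- and Novikov-type arguments forces $M$ to fiber over $S^1$ with $\pi_1(M)\cong\Z^n$, whence $M$ is a torus --- I would follow Franks \cite{Fr} and Newhouse \cite{N} verbatim. For (d) (Yano), the plan is to combine two inputs: a closed negatively curved manifold has positive simplicial volume $\|M\|>0$ (Gromov, Thurston), while a closed manifold supporting a transitive Anosov diffeomorphism has $\|M\|=0$ --- the unstable and stable foliations present $M$ by charts assembled from Euclidean, hence amenable, pieces, and a Gromov-style amenable-cover/vanishing argument (using transitivity to bound the combinatorics) kills the simplicial volume. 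This vanishing statement is the main obstacle I would face in reconstructing (d), and it is precisely the point I would defer to \cite{Yano}; it is also exactly the tension --- positive simplicial volume versus transitive Anosov dynamics --- that the present paper pushes into the setting of products.
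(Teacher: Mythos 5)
The paper itself offers no proof of Theorem \ref{results}: parts (a)--(c) are pure citations, and the only argument it supplies is the sketch of Yano's proof of (d) in Section \ref{simplicialnorm}. Measured against that, the serious problem in your proposal is part (d). You base it on the claim that a closed manifold supporting a transitive Anosov diffeomorphism has vanishing simplicial volume, to be proved by an amenable-cover argument, and you defer this claim to \cite{Yano}. But that statement is not what Yano proves, and it is not available anywhere (it is essentially an open question, tied to the conjecture that such manifolds are infranilmanifolds; were it known, results such as the aspherical examples in Section \ref{examples} of this paper would be immediate). So your argument for (d) has a hole exactly at its load-bearing step. Yano's actual argument, reproduced in Section \ref{simplicialnorm}, runs differently: by (a) one may assume $\dim M\geq 3$ and codimension $k\geq 2$; the Ruelle--Sullivan class $\alpha\in H^k(M;\R)$ satisfies $f^*(\alpha)=\lambda\cdot\alpha$ with $\lambda=e^{\pm h_{top}(f)}\neq 1$, so its Poincar\'e (or Kronecker) dual homology class is scaled by a factor of modulus $\neq 1$ under $f_*$ and hence has vanishing simplicial semi-norm, contradicting the Gromov--Inoue--Yano theorem \cite{G,IY} that every non-trivial real homology class of degree $>1$ on a closed negatively curved manifold has positive semi-norm. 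Note the trade-off: your route would only need a top-degree vanishing statement, but that statement is unproven; Yano's route needs no dynamical vanishing theorem, at the cost of using positivity of the semi-norm in \emph{intermediate} degrees --- precisely the feature special to negative curvature whose failure for products motivates the rest of this paper.

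Two further points on your reconstructions, which matter less since the paper only cites \cite{Fr,N,RS,Sh} for (a)--(c). In (c), ``after passing to a power we may assume $E^s$ and $E^u$ are oriented'' is not legitimate: orientability of $E^u$ is a property of the bundle and cannot be gained by replacing $f$ with $f^m$ (and passing to an orientation double cover changes $M$, so the hypothesis $f^*=\id$ on $H^*(M;\Q)$ does not transfer to the lift). Without preserved orientations the fixed-point indices of $f^m$ need not share a sign, so $|L(f^m)|=\#\mathrm{Fix}(f^m)$ can fail and cancellation could keep $L(f^m)$ bounded; Shiraiwa's cited proof must be (and is) organized to avoid this, so you cannot wave it away. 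Also, exponential growth of $\#\mathrm{Fix}(f^m)$ for a possibly non-transitive Anosov diffeomorphism requires the Axiom A/spectral decomposition machinery rather than just ``positive topological entropy.'' Your sketch of (b), by contrast, is a fair outline of the construction in \cite{RS} and is consistent with how the paper uses it.
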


In the sequel, we will refer to the cohomology class of Theorem \ref{results} (b) as {\em Ruelle-Sullivan cohomology class}. In Section \ref{simplicialnorm}, we will see how this class can be used to rule out transitive Anosov diffeomorphisms on negatively curved manifolds, which is Yano's Theorem \ref{results} (d). We note that in dimensions higher than two the transitivity assumption in Yano's result  is in fact not needed as explained in \cite[Cor. 4.5]{GL}; we will discuss this briefly in Section \ref{outer}. 

In recent years, there have been several attempts to extend the above non-existence results to Cartesian products:

\begin{que}\label{que1}
\label{p:anosovdiffeo}
Let $M$ and $N$ be two closed manifolds such that at least one of them does not support Anosov diffeomorphisms. Does $M\times N$ support an Anosov diffeomorphism?
\end{que}

Concerning, in one direction, manifolds with non-trivial higher homotopy groups, and, in another direction, aspherical manifolds, the following results were proven recently:

\begin{thm}\label{resultsGHGL}\
\begin{itemize}
\item[(a)] {\normalfont(Gogolev-Rodriguez Hertz \cite{GH}).} Let $M$ be a closed $m$-dimensional manifold. If $n > m$, then the product $M \times S^n$ does not support transitive Anosov diffeomorphisms. Moreover, if $n$ is odd, then $M \times S^n$ does not support Anosov diffeomorphisms.
\item[(b)] {\normalfont(Gogolev-Lafont \cite{GL}).} Let $N$ be a closed infranilmanifold and $M$ be a closed smooth aspherical manifold such that
 $\pi_1(M)$ is Hopfian, 
 the outer automorphism group $\Out(\pi_1(M))$ is finite, and the intersection of all maximal nilpotent subgroups of $\pi_1(M)$ is trivial. 
Then $M \times N$ does not support Anosov diffeomorphisms.
\end{itemize}
\end{thm}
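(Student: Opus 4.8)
The plan is to treat the two items separately, as they are established by different methods in \cite{GH} and \cite{GL}.

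For (a), I would argue by contradiction: suppose $f$ is a transitive Anosov diffeomorphism of $X = M \times S^n$ with $n > m = \dim M$. First I would pass to a finite cover so that the invariant distributions, hence $X$ itself, are orientable; since $\pi_1(S^n)$ is trivial for $n \ge 2$ (the case $n = 1$ turning $X$ into a torus, where there is nothing to prove), the cover is again a product $M' \times S^n$ with $\dim M' = m < n$. By the Künneth formula, $H^j(X;\R)$ equals $H^j(M;\R)$ for $j \le m$, vanishes for $m < j < n$, is spanned by the pullback $\sigma$ of the fundamental class of $S^n$ for $j = n$, and consists of the classes $\eta\cup\sigma$ with $\eta \in H^{j-n}(M;\R)$ for $j \ge n$; in particular $f^*\sigma = \pm\sigma$ and $\sigma^2 = 0$, so $f^*$ preserves the decomposition of $H^*(X;\R)$ into the subring $H^*(M;\R)$ and the square-zero ideal it generates with $\sigma$, acting on the latter as $\pm$ its action on the former. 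Setting $k = \dim E^s \le \dim E^u = \ell$, I would then use Theorem \ref{results}(b) together with Poincaré duality to obtain non-zero $\alpha \in H^k(X;\R)$ and $\beta \in H^\ell(X;\R)$ with $f^*\alpha = \lambda\alpha$, $f^*\beta = \mu\beta$, $\lambda, \mu > 0$, $\lambda \ne 1 \ne \mu$, and---since the stable and unstable foliations are transverse---$\alpha\cup\beta \ne 0$, whence $\lambda\mu = 1$. Because $k \le (m+n)/2 < n$, the gap $m < j < n$ in the cohomology forces $k \le m$, so $\alpha$ sits in the base summand $H^k(M;\R)$ while $\beta$, of degree $\ell \ge n$, sits in the $\sigma$-summand; moreover, as $\beta$ arises from the unstable Ruelle-Sullivan current, $\mu = e^{h_{\mathrm{top}}(f)} = \rho(f^*) > 1$ is the dominant eigenvalue of $f^*$. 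The hard part is the last step, which is the content of \cite{GH}: one must show that no self-homotopy-equivalence of $M \times S^n$ with $n > m$ can exhibit this eigenvalue pattern---the $S^n$-factor, on which $f^*$ acts by $\pm 1$, cannot carry the dominant eigenvalue, and the base $M$ has too small a dimension to do so either---which contradicts the existence of $\beta$. For the stronger conclusion when $n$ is odd, where transitivity is dropped, I would instead compute the Lefschetz numbers $L(f^r)$ using $\sigma^2 = 0$ and $f^*\sigma = \pm\sigma$, and combine the outcome with Shiraiwa's Theorem \ref{results}(c) and the structure of the periodic-point data of an Anosov diffeomorphism.

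For (b), the plan is homotopy-theoretic until the very end. Suppose $f$ is Anosov on $X = M \times N$, and write $G = \pi_1(M)$, $\Gamma = \pi_1(N)$, so $\pi_1(X) = G \times \Gamma$ with $\Gamma$ virtually nilpotent and $X$ aspherical. The first and central step I would carry out is the algebraic statement that a suitable power $f_\#^r$ of the induced automorphism of $G \times \Gamma$ preserves both factors and, after adjusting $f$ within its homotopy class, restricts to the identity on $G$; all three hypotheses enter here: triviality of the intersection of the maximal nilpotent subgroups of $G$ makes the corresponding intersection in $G \times \Gamma$ a characteristic subgroup lying inside $1 \times \Gamma$, which pins down $f_\#$ on the $\Gamma$-factor and, via centralizers, forces it to preserve $1 \times \Gamma$ and $G \times 1$; finiteness of $\Out(G)$ lets one pass to a power on which the action on $G$ is inner; and Hopfianness of $G$ ensures the maps produced are genuine automorphisms. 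Using asphericity of $M$ and $N$, I would then promote this to the statement that $f^r$ is homotopic to a fibre-preserving map, essentially $\id_M \times h$, and, comparing degrees and using infranilmanifold rigidity, that $h$ is homotopic to an affine automorphism of $N$. The final step is to see this is impossible for an Anosov diffeomorphism: its $\pi_1$-action would then be the identity on the infinite normal subgroup $G \times 1$---the dynamics being neutral in all the $M$-directions---which is incompatible with the hyperbolicity of the $\pi_1$-action of an Anosov diffeomorphism; equivalently, the Ruelle-Sullivan class of $f^r$ would be pulled back from $N$, and \cite{GL} shows that the hyperbolic behaviour cannot be confined to the $N$-factor when $M$ is a non-trivial aspherical manifold of the stated type.

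The step I expect to be the main obstacle is, in each case, the conversion of the structural constraints into a genuine contradiction. In (a) the cohomological bookkeeping is forced but by itself consistent, so the work lies entirely in the obstruction theory for self-maps of $M \times S^n$ with $n > \dim M$ that excludes the dominant Ruelle-Sullivan eigenvalue, together with the separate Lefschetz argument needed to remove transitivity when $n$ is odd; in (b) the difficulty is concentrated in the opening algebraic lemma, where the three hypotheses on $\pi_1(M)$ must be orchestrated to make a power of $f_\#$ respect the product splitting and act trivially on $G$, after which the reduction to a fibre-preserving map and the concluding clash with hyperbolic dynamics are comparatively soft.
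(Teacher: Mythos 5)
First, a point of reference: the paper you were asked to match does not prove Theorem \ref{resultsGHGL} at all --- it is a background statement quoted with attribution to \cite{GH} and \cite{GL}, so the only meaningful comparison is with those papers. Judged as a proof, your proposal is an outline that stalls exactly at the decisive steps, and at those points it appeals to the results it is supposed to establish. In part (a), the cohomological bookkeeping you set up (orientation cover, K\"unneth with the gap $m<j<n$, $f^*\sigma=\pm\sigma$, the two Ruelle--Sullivan classes in complementary degrees with $\lambda\mu=1$ via a non-zero cup product) is fine as far as it goes, but it is, as you yourself note, ``by itself consistent'': nothing in it yet contradicts anything. The sentence doing all the work --- ``the $S^n$-factor cannot carry the dominant eigenvalue, and the base $M$ has too small a dimension to do so either, which is the content of \cite{GH}'' --- is not an argument; there is no general principle that $H^*(M;\R)$ in small degrees cannot carry an eigenvalue $e^{h_{top}}>1$ (it does, e.g., for hyperbolic toral automorphisms), so the actual mechanism producing the contradiction is precisely what is missing. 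You also silently use more than Theorem \ref{results}(b) states (the second class in degree $\dim E^u$, the non-vanishing of the cup product, and the identification $\mu=e^{h_{top}(f)}=\rho(f^*)$, the last of which needs Yomdin-type input), and the Lefschetz computation for odd $n$ --- where $\sigma^2=0$ and $f^*\sigma=\pm\sigma$ force $L(f^{2N})=0$ while Anosov diffeomorphisms have exponentially many periodic points of constant index after passing to suitable covers and powers --- is only gestured at, not carried out.

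Part (b) has the same structure. The opening algebraic step (a power of $\pi_1(f)$ preserves the splitting $G\times\Gamma$ and is inner on $G$) is plausibly where Hopficity, finiteness of $\Out(G)$ and the triviality of the intersection of maximal nilpotent subgroups enter, and this matches the spirit of \cite{GL}; but you do not actually prove that the intersection of maximal nilpotent subgroups of $G\times\Gamma$ sits inside $1\times\Gamma$ and forces the splitting to be preserved, nor that $f^r$ can be homotoped to $\id_M\times h$ with $h$ affine --- you assert these and move on. More seriously, your concluding contradiction --- that a $\pi_1$-action which is the identity on the infinite normal subgroup $G\times 1$ is ``incompatible with the hyperbolicity of the $\pi_1$-action of an Anosov diffeomorphism,'' with the fallback ``\cite{GL} shows that the hyperbolic behaviour cannot be confined to the $N$-factor'' --- is circular: ruling out exactly this configuration is the content of the theorem, and in \cite{GL} it requires genuine work (Lefschetz/Nielsen-type counting compared with the exponential growth of periodic points, in the style of the paper's Section \ref{outer} but adapted to the product with the infranil factor), not a soft incompatibility. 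So the proposal correctly identifies the architecture of both proofs but leaves the load-bearing steps as black boxes; as a standalone argument it has genuine gaps at the points you yourself flag as ``the main obstacle.''
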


Our goal is to provide further evidence towards a positive answer to the Anosov-Smale question, by extending the classes of product manifolds that do not support (transitive) Anosov diffeomorphisms. 
In this paper, we consider products of negatively curved manifolds with a rational homology sphere. In particular, we obtain new examples of manifolds that do not support transitive Anosov diffeomorphisms, including certain manifolds with non-trivial higher homotopy groups and certain aspherical manifolds. 
Our first result concerns products of a negatively curved manifold with a rational homology sphere:

\begin{thm}\label{t:main1}
If $M$ is a closed negatively curved manifold and $N$ is a rational homology sphere, 
then the product $M\times N$
does not support transitive Anosov diffeomorphisms with orientable invariant distributions.
\end{thm}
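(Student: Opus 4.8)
The plan is to argue by contradiction: suppose $f\colon M\times N\longrightarrow M\times N$ is a codimension $k$ transitive Anosov diffeomorphism with orientable invariant distributions, where $\dim M = m$ and $N$ is a rational homology sphere of dimension $n$, so $\dim(M\times N) = m+n$. By Theorem \ref{results}(b) there is a nonzero class $\alpha\in H^k(M\times N;\R)$ with $f^*\alpha = \lambda\alpha$ for some positive $\lambda\neq 1$; call this the Ruelle-Sullivan class. Since $N$ is a rational homology sphere, the K\"unneth formula gives $H^*(M\times N;\R)\cong H^*(M;\R)\otimes H^*(N;\R) \cong H^*(M;\R)\oplus H^{*-n}(M;\R)$, the second summand generated by the fundamental class $[N]^{\vee}$ of $N$. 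So $\alpha$ decomposes as $\alpha = \beta + \gamma\times \omega_N$, where $\beta\in H^k(M;\R)$ (pulled back via the projection $p_M$), $\omega_N\in H^n(N;\R)$ is a generator, and $\gamma\in H^{k-n}(M;\R)$. The key structural input is that, because $M$ is negatively curved and hence aspherical with hyperbolic (in particular Hopfian, with finite outer automorphism group) fundamental group, any self-map of $M\times N$ must interact rigidly with the factor $M$: up to homotopy $f$ covers a self-homotopy-equivalence of $M$, which by Mostow-type rigidity / the structure of $\Out(\pi_1 M)$ acts on $H^*(M;\R)$ through a finite group, hence with all eigenvalues roots of unity.

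The main steps are then as follows. First, I would establish that the projection $p_M\colon M\times N\to M$ induces, after passing to a suitable power of $f$, a commuting square up to homotopy: since $\pi_1(M\times N) = \pi_1 M$ (as $N$ is simply connected — note a rational homology sphere of dimension $\geq 2$ need not be simply connected, but $\pi_1$ has no infinite abelianization; here one uses that $\pi_1 M$ is centerless hyperbolic to split off the $M$-direction as in Gogolev-Lafont), $f_*$ on $\pi_1$ is an automorphism of $\pi_1 M$, and replacing $f$ by a power we may assume this automorphism is inner, so $f$ is homotopic to a map of the form $(x,y)\mapsto (x, g(x,y))$ up to the $M$-coordinate. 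Consequently $f^*$ preserves the subspace $p_M^*H^*(M;\R)$ and acts as the identity on it (after the power), since an inner automorphism acts trivially on cohomology. Second, I would analyze the induced action on the quotient $H^*(M\times N;\R)/p_M^*H^*(M;\R)\cong H^{*-n}(M;\R)$, which is the "top $N$-degree" part; this action is again governed by $f_*$ on $\pi_1 M$ together with the degree of $f$ restricted to the $N$-fibers, and after taking a power it too becomes multiplication by a root of unity times that fiber degree — but the fiber degree over the (connected) base $M$ is a single integer $d$, and since $f$ is a diffeomorphism $d = \pm 1$. Hence on this quotient too, a power of $f^*$ acts as $\pm\id$.

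Putting these together: after replacing $f$ by a suitable power $f^N$, the induced map $(f^N)^*$ on $H^k(M\times N;\R)$ has all eigenvalues equal to $\pm 1$ (it is, in a suitable basis adapted to the filtration $0\subset p_M^*H^k(M;\R)\subset H^k(M\times N;\R)$, upper triangular with $\pm 1$ on the diagonal, and in fact the off-diagonal block vanishes once one notes the filtration is $f^*$-equivariantly split by the K\"unneth decomposition). But the Ruelle-Sullivan class $\alpha$ is an eigenvector of $f^*$ with eigenvalue $\lambda$, hence an eigenvector of $(f^N)^*$ with eigenvalue $\lambda^N$, forcing $\lambda^N = \pm 1$; combined with $\lambda > 0$ this gives $\lambda = 1$, contradicting $\lambda\neq 1$. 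I expect the main obstacle to be the rigidity step: carefully justifying that a self-diffeomorphism of $M\times N$ is, up to homotopy and up to taking powers, a "product-like" map respecting the $M$-factor trivially on cohomology — this requires the centerless hyperbolic structure of $\pi_1 M$ (to kill the center and split $\pi_1$ of the product), Hopfianity and finiteness of $\Out(\pi_1 M)$ (to pass to a power where the $\pi_1$-action is inner), plus the observation that the fiber degree is a well-defined unit because $M$ is connected and $f$ is a diffeomorphism. The cohomological bookkeeping via K\"unneth is then routine, and the Ruelle-Sullivan eigenvalue relation delivers the contradiction immediately.
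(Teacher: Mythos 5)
Your overall skeleton (Ruelle--Sullivan class, K\"unneth decomposition, an upper-triangular action of $f^*$ with root-of-unity diagonal) is in the spirit of the paper, but the two steps that carry all the weight are asserted rather than proved, and as stated they fail. First, the rigidity claim that, after passing to a power, $f$ is homotopic to a map of the form $(x,y)\mapsto(x,g(x,y))$, so that $f^*$ preserves $p_M^*H^*(M;\R)$ and acts on it through a finite group, does not follow from fundamental-group considerations: when $N$ is simply connected, $M\times N$ is not aspherical, so $\pi_1$ only controls the image of $H^*(B\pi_1(M\times N);\R)$, not all of $H^k(M\times N;\R)$; and when $N$ has infinite fundamental group (the theorem allows this, and the paper's own examples include aspherical hyperbolic homology spheres), $\pi_1(M\times N)=\pi_1(M)\times\pi_1(N)$ and there is no reason $\pi_1(f)$ should preserve the $\pi_1(M)$ factor or that one can ``split off the $M$-direction.'' The paper establishes only the cohomological statement you need (Proposition \ref{effect of f on M}: $f^*$ maps $H^*(M;\R)\times 1$ into itself), and its proof is not formal: it combines Thom's realization theorem (Theorem \ref{t:Thom}) with the fact that non-trivial classes of a negatively curved manifold are not realizable by products (Corollary \ref{not representable}), Poincar\'e duality, and positivity of simplicial semi-norms. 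None of this appears in your proposal, and finiteness of $\Out(\pi_1(M))$ alone cannot substitute for it.

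Second, the ``fiber degree'' step is not available: $f$ does not carry fibers $\{x\}\times N$ to fibers, so there is no a priori well-defined degree $d=\pm1$; the relevant quantity is the integer coefficient $c$ in $f^*(1\times\omega_N)=(\alpha_M^n\times 1)+c\cdot(1\times\omega_N)$ (equation (\ref{eq2})), and proving $c=\pm1$ requires first showing that $f^*(\omega_M\times 1)$ has no cross term (equation (\ref{eq1})), which again rests on non-domination of $M$ by products; moreover this is genuinely delicate when $m=n$, where cross terms $a',c'$ can occur and the paper needs a separate simplicial-volume analysis. Relatedly, your claim that the K\"unneth filtration is $f^*$-equivariantly split is false in general (the cross term $\alpha_M^n$ above); one only gets triangularity. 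Finally, your root-of-unity argument silently assumes $\dim M\geq 3$: for a hyperbolic surface $\Sigma$, $\Out(\pi_1(\Sigma))$ is infinite and its action on $H^1(\Sigma;\R)$ can have eigenvalues off the unit circle, so the conclusion that a power of $f^*$ has eigenvalues $\pm1$ breaks down; the paper treats the surface case separately using $\|\Sigma\|>0$. In short, the proposal has genuine gaps exactly where the paper's technical work (Proposition \ref{effect of f on M} and the $m=n$, $m=2$ cases) is done.
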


In particular, we obtain the following immediate consequence that includes manifolds with non-trivial higher homotopy groups; compare Theorem \ref{resultsGHGL} (a):

\begin{cor}\label{c:sphere}
For any negatively curved manifold $M$, the product $M\times S^n$ does not support transitive Anosov diffeomorphisms.
\end{cor}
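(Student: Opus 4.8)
The corollary is essentially an immediate consequence of Theorem~\ref{t:main1}: the sphere $S^n$ is trivially a rational homology sphere, so the only thing it adds over Theorem~\ref{t:main1} is that the invariant distributions are no longer assumed orientable. The plan is therefore to assume for contradiction that $f\colon M\times S^n\to M\times S^n$ is a codimension $k$ transitive Anosov diffeomorphism, and to pass to a finite cover of $M\times S^n$ that is again of the form ``closed negatively curved manifold $\times$ rational homology sphere'' and on which a suitable lift of $f$ is a codimension $k$ transitive Anosov diffeomorphism with \emph{orientable} invariant distributions; then Theorem~\ref{t:main1} yields the desired contradiction.

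For the passage to orientable distributions, write $X=M\times S^n$ with its $df$-invariant splitting $TX=E^s\oplus E^u$. Since $df$ restricts to a bundle isomorphism $E^s\xrightarrow{\ \cong\ }f^*E^s$, and similarly for $E^u$, we get $f^*w_1(E^s)=w_1(E^s)$ and $f^*w_1(E^u)=w_1(E^u)$ in $H^1(X;\Z/2)=\mathrm{Hom}(\pi_1(X),\Z/2)$. Hence $f_*$ preserves the subgroup $H:=\ker w_1(E^s)\cap\ker w_1(E^u)$ of $\pi_1(X)$ (of index at most $4$), so $f$ lifts to a diffeomorphism $\hat f$ of the associated finite cover $p\colon\hat X\to X$. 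On $\hat X$ the pulled-back distributions $p^*E^s,p^*E^u$ are orientable, and $\hat f$ is Anosov of the same codimension $k$ (pull back the Riemannian metric; since $p$ is a local diffeomorphism the defining inequalities transfer verbatim). For $n\ge2$ one has $\pi_1(X)=\pi_1(M)$, so $\hat X=\widetilde M\times S^n$ with $\widetilde M\to M$ a finite cover, and pulling back a negatively curved metric makes $\widetilde M$ a closed negatively curved manifold, while $S^n$ remains a rational homology sphere.

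The only step that is not bookkeeping, and the one I expect to be the crux, is that $\hat f$ is still \emph{transitive}. Here I would argue: transitivity of $f$ gives $\Omega(f)=X$ for the non-wandering set, hence $\overline{\mathrm{Per}(f)}=X$ by the Anosov closing lemma; a periodic orbit of $f$ is a finite set whose $p$-preimage is a finite set permuted by $\hat f$, so $p^{-1}(\mathrm{Per}(f))\subseteq\mathrm{Per}(\hat f)$, and as $p$ is a covering map $\overline{\mathrm{Per}(\hat f)}\supseteq\overline{p^{-1}(\mathrm{Per}(f))}=p^{-1}\bigl(\overline{\mathrm{Per}(f)}\bigr)=\hat X$; therefore $\Omega(\hat f)=\hat X$. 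By Smale's spectral decomposition, $\Omega(\hat f)=\hat X$ is a finite disjoint union of basic sets, each open in $\Omega(\hat f)=\hat X$; since $\hat X$ is connected there is only one, so $\hat f$ is transitive. (If one also wants $\hat f$ to preserve the chosen orientations of $p^*E^s,p^*E^u$ — not required by the statement of Theorem~\ref{t:main1} — replace $\hat f$ by $\hat f^{\,2}$; the identical argument, using $\mathrm{Per}(\hat f^{\,2})=\mathrm{Per}(\hat f)$, shows it stays transitive.) Thus for $n\ge2$ the product $\widetilde M\times S^n$ supports a codimension $k$ transitive Anosov diffeomorphism with orientable invariant distributions, contradicting Theorem~\ref{t:main1}.

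Finally, for $n=1$ the cover $\hat X$ need not be a product: either $w_1(E^u)$ has trivial component along the $S^1$-factor, in which case $\hat X=\widetilde M\times S^1$ is already a product, or it does not, in which case $\hat X$ is the mapping torus of the nontrivial deck transformation $\tau$ of a finite connected negatively curved cover $\widetilde M$ of $M$. In the latter case I would pass once more to the product double cover $\widetilde M\times S^1\to\hat X$; since $\pi_1(\hat X)$ is finitely generated it has only finitely many index-$2$ subgroups, so some power of $\hat f$ preserves the one determining this cover and hence lifts to a diffeomorphism of $\widetilde M\times S^1$, which is again Anosov with orientable invariant distributions and, by the argument of the previous paragraph, transitive. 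Theorem~\ref{t:main1} then applies to $\widetilde M\times S^1$ exactly as before. In sum, the only genuinely non-formal ingredient throughout is the stability of transitivity under finite coverings; everything else is manipulation of coverings and Stiefel--Whitney classes.
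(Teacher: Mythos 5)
Your proof is correct and follows essentially the route the paper intends: the corollary is deduced from Theorem~\ref{t:main1} because $S^n$ is a rational homology sphere, with the orientability hypothesis removed by the standard passage to the finite cover determined by $w_1(E^s)$ and $w_1(E^u)$ (a step the paper only mentions parenthetically in Section~\ref{outer}). Your additional details --- that the cover of $M\times S^n$ is again of the form negatively curved manifold times $S^n$, that the lift stays transitive via density of periodic points and spectral decomposition, and the separate treatment of $n=1$ --- are exactly the bookkeeping the paper leaves implicit.
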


Ruelle-Sullivan~\cite{RS} suggested that the study of Anosov diffeomorphisms on a manifold can hopefully be reduced to the study of the algebraic topology of that manifold. 
In order to prove Theorem \ref{t:main1}, we will study the cohomology ring of $M\times N$ and show that a Ruelle-Sullivan class cannot exist for those products. To this end, we will use several properties that are implied by the presence of the negatively curved factor $M$. Among those properties is the fact that every non-trivial homology class $\alpha\in H_k(M;\R)$ has non-zero simplicial semi-norm when $k>1$~\cite{G,IY}. Also, we will show that such a class $\alpha$ cannot be realized by products of homology classes of lower degrees; cf. Corollary \ref{not representable}. Furthermore, an important tool is that the outer automorphism group $\Out(\pi_1(M))$ is finite when $\dim M \geq 3$; see Section \ref{outer}. 

Theorem \ref{t:main1} should still hold if we replace $M$ with a finite product $M_1\times\cdots\times M_s$ of negatively curved manifolds. 
Passing however to $M_1\times\cdots\times M_s$, $s\geq2$, 
the positivity of the simplicial semi-norm of non-trivial homology classes of degrees greater than one does not hold anymore~\cite{G,IY}, and, of course, homology classes in $H_k(M_1\times\cdots\times M_s;\R)$ can be products themselves. Nevertheless, every homology class of each factor $M_i$ is still not realizable by products and has non-zero simplicial semi-norm (in degrees $>1$). Moreover, when $\dim (M_i)\geq 3$ for all $i=1,...,s$, then $\Out(\pi_1(M_1\times\cdots\times M_s))$ remains finite; cf. Section \ref{outer}. With the additional assumption that the rational homology sphere has vanishing simplicial volume we obtain the following:

\begin{thm}\label{t:main2}
Let $M_1,...,M_s$ be closed negatively curved manifolds of dimensions greater than two and $N$ be a rational homology sphere which has vanishing simplicial volume. Then the product $M_1 \times\cdots\times M_s \times N$ does not support transitive Anosov diffeomorphisms with orientable invariant distributions.
\end{thm}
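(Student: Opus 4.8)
The plan is to argue by contradiction, refining the strategy behind Theorem \ref{t:main1}: extract from a hypothetical Anosov diffeomorphism its two Ruelle--Sullivan classes, use the hypothesis $\|N\|=0$ together with the negatively curved factors to push these classes into the cohomology pulled back from $M:=M_1\times\cdots\times M_s$, and then contradict the nonvanishing of their cup product. Concretely, put $m:=\dim M$, $n:=\dim N$, $d:=m+n$, and suppose $f$ is a codimension $k$ transitive Anosov diffeomorphism of $W:=M\times N$ with orientable invariant distributions. Applying the Ruelle--Sullivan construction (Theorem \ref{results}(b)) to the unstable and to the stable foliation produces nonzero classes $\alpha\in H^k(W;\R)$ and $\alpha'\in H^{d-k}(W;\R)$ and positive reals $\lambda\neq 1$, $\lambda'\neq 1$ with $f^*\alpha=\lambda\alpha$ and $f^*\alpha'=\lambda'\alpha'$; since the two foliations are everywhere transverse, their invariant transverse measures pair nontrivially, so $\alpha\smile\alpha'\neq 0$ in $H^d(W;\R)\cong\R$, and comparing with the fact that $f^*$ acts by $\deg(f)=\pm1$ on $H^d$ forces $\lambda'=\lambda^{-1}$. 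Since $\pi_1(W)$ is nonabelian, $W$ is not homeomorphic to a torus, so $k\geq 2$ by Franks--Newhouse (Theorem \ref{results}(a)); hence also $d-k\geq d/2\geq 2$.

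I would then pass to homology. A diffeomorphism is a homotopy equivalence, so $f_*$ preserves the simplicial seminorm on $H_*(W;\R)$; taking Poincar\'e duals, $f_*(\mathrm{PD}_W\alpha)=\pm\lambda^{-1}\mathrm{PD}_W\alpha$ gives $\|\mathrm{PD}_W\alpha\|=\lambda^{-1}\|\mathrm{PD}_W\alpha\|$, hence $\|\mathrm{PD}_W\alpha\|=0$, and likewise $\|\mathrm{PD}_W\alpha'\|=0$. As $N$ is a rational homology sphere, K\"unneth gives $H_*(W;\R)\cong H_*(M;\R)\otimes\bigl(H_0(N;\R)\oplus H_n(N;\R)\bigr)$; writing $\mathrm{PD}_W\alpha=\gamma_0\times[\mathrm{pt}]+\gamma_1\times[N]$ and applying the projection $q\colon W\to M$ (which has a section, so $q_*$ is surjective, seminorm nonincreasing, and $q_*(\mathrm{PD}_W\alpha)=\gamma_0$), one gets $\|\gamma_0\|=0$, and similarly for $\alpha'$. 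Here the hypothesis $\|N\|=0$ is exactly what makes the remaining piece inert: $\|\gamma_1\times[N]\|\leq C\,\|\gamma_1\|\,\|N\|=0$, so no control of $\gamma_1$ is available or needed and the whole argument now rests on $\gamma_0\in H_*(M;\R)$.

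The heart of the matter is to deduce $\gamma_0=0$ from $\|\gamma_0\|=0$, i.e.\ that $\alpha$ (and likewise $\alpha'$) lies in $q^*H^*(M;\R)$. For $s=1$ this is the single sentence ``$\|\gamma_0\|=0$ and $\deg\gamma_0>1$ imply $\gamma_0=0$'' by Gromov and Inoue--Yano; for $s\geq 2$ that positivity fails, so I would instead argue K\"unneth component by K\"unneth component over $M_1,\dots,M_s$. Decompose $\gamma_0=\sum_{\mathbf j}\gamma_0^{\mathbf j}$ with $\mathbf j=(j_1,\dots,j_s)$, $\sum j_i=\deg\gamma_0$. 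For a multidegree $\mathbf j$ with all $j_i\neq 1$ there is a bounded cohomology class on $M$ of that multidegree --- a cross product of bounded classes on the $M_i$, available because each $\pi_1(M_i)$ is hyperbolic so that $H^{j}_b(M_i;\R)\to H^{j}(M_i;\R)$ is surjective for $j\neq 1$ (Mineyev) --- pairing nontrivially with $\gamma_0^{\mathbf j}$ unless $\gamma_0^{\mathbf j}=0$; since $\|\gamma_0\|=0$ this forces $\gamma_0^{\mathbf j}=0$. Thus $\gamma_0$ is supported on multidegrees in which some $M_i$ contributes in degree exactly $1$. Eliminating these surviving degree-one components is the main obstacle, and it is where the remaining hypotheses intervene: the finiteness of $\Out(\pi_1(M))$ (valid because all $\dim M_i\geq 3$) constrains the action of $f_*$ on the part of $H_*(W;\R)$ carried by $H_1(M;\R)$, and --- crucially --- the fact that $f_*$ is seminorm preserving while $\|N\|=0$, together with Corollary \ref{not representable}, prevents $f_*$ from routing an $M_i$-class through the seminorm-invisible classes of $N$ or from realizing such a component by the product-type Ruelle--Sullivan current. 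Carried out (with a short case split according to whether $k\geq n$, which governs whether $H^k(W;\R)$ has a summand not coming from $M$), this yields in every case either $\alpha=0$ or $\alpha'=0$ at once, or else $\alpha=q^*a$ and $\alpha'=q^*a'$ with $a\in H^k(M;\R)$ and $a'\in H^{d-k}(M;\R)$ both nonzero.

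Granting this, the proof ends immediately: $\deg a+\deg a'=k+(d-k)=d=m+n>m=\dim M$, so $a\smile a'\in H^{>m}(M;\R)=0$ and therefore $\alpha\smile\alpha'=q^*(a\smile a')=0$, contradicting $\alpha\smile\alpha'\neq 0$. I expect the genuinely hard step to be precisely the elimination of the ``some factor in degree one'' components of $\gamma_0$ (and of its analogue for $\alpha'$): this is the point at which the multi-factor statement is not a formal consequence of the single-factor one, where the hypotheses $\dim M_i\geq 3$, $\|N\|=0$ and the non-representability of the $M_i$-classes by products are simultaneously needed, and where the interplay between the $f$-action on $\pi_1$ and on simplicial volume has to be handled with care.
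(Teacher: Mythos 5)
Your architecture is genuinely different from the paper's: you invoke both Ruelle--Sullivan classes $\alpha,\alpha'$ together with the nonvanishing of $\alpha\smile\alpha'$ (a fact from \cite{RS}, used e.g.\ in \cite{GH}, which this paper never needs), pass to Poincar\'e duals and the simplicial seminorm, and aim to show that both classes are pulled back from $M=M_1\times\cdots\times M_s$, finishing by a degree count. The endgame is fine \emph{granted} the pullback claim, and the first reduction (zero seminorm of $q_*\mathrm{PD}_W\alpha=\gamma_0$, then killing the K\"unneth multidegrees of $\gamma_0$ with all $j_i\neq 1$ by pairing with products of bounded classes on the hyperbolic groups $\pi_1(M_i)$) is plausible. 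But there is a genuine gap exactly where you flag it: you never eliminate the components of $\gamma_0$ (and of the analogous class for $\alpha'$) in which some $M_i$ contributes in degree one. You only express the expectation that finiteness of $\Out(\pi_1(M))$ and Corollary \ref{not representable} will ``intervene,'' with no argument, and it is not clear that they can in your framework: seminorm and bounded cohomology see nothing in degree one; the finiteness of $\Out(\pi_1(M))$ is used in this paper for an eigenvalue argument on classes pulled back from the aspherical factor (Section \ref{outer}), not for killing K\"unneth components of a seminorm-zero homology class; and Corollary \ref{not representable} concerns classes of the individual factors $M_i$, not mixed classes of $M\times N$. Since this is precisely the point at which the multi-factor statement fails to be a formal consequence of Theorem \ref{t:main1}, the proposal as written does not prove the theorem.

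For comparison, the paper never needs to show that the Ruelle--Sullivan class is pulled back from $M$ (in its scheme such a class is itself impossible, by the $\Out$-finiteness argument). Instead it proves Proposition \ref{p:cohomologymap}: $f^*$ preserves the subring $H^*(M;\R)\times 1$ of $H^*(M\times N;\R)$. The degree-one difficulty you isolate appears there (the case $u_j=1$, which is only dangerous when $n=1$) and is resolved not by $\Out$-finiteness but by cupping with the complementary class $\omega_{M_1}\times\cdots\times x_{M_j}^{m_j-1}\times\cdots\times\omega_{M_s}$ and repeating the top-degree computation (\ref{omegaM}); the hypotheses $\|M_i\|>0$ and $\|N\|=0$ enter through (\ref{eq1}), (\ref{eq2}) and (\ref{ac=1}). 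The theorem then follows from the three-case analysis $k<n$, $k=n$, $k>n$, each case contradicting either $\lambda\neq\pm1$ or the finiteness of $\Out(\pi_1(M))$. If you want to salvage your route, the missing step needs an argument of this cup-product/Poincar\'e-duality type, or some other use of the equivariance $f^*\alpha=\lambda\alpha$ itself; the seminorm alone cannot reach the degree-one components.
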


Since spheres have vanishing simplicial volume we obtain:

\begin{cor}\label{c:sphere1}
Let $M_1,...,M_s$ be closed negatively curved manifolds of dimensions greater than two. Then the product $M_1 \times\cdots\times M_s \times S^n$ does not support transitive Anosov diffeomorphisms.
\end{cor}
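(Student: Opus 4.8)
The plan is to derive Corollary~\ref{c:sphere1} from Theorem~\ref{t:main2}, the only issues being to check that $S^n$ satisfies the hypotheses imposed there on the factor $N$, and to remove the assumption that the invariant distributions be orientable. For the first issue, $S^n$ (with $n\ge 2$) is an integral, hence rational, homology sphere, and it admits self-maps of every degree --- e.g.\ the $(n-1)$-fold suspension of $z\mapsto z^d$ on $S^1$. Since a closed oriented $n$-manifold $W$ carrying a self-map of degree $d$ satisfies $\|W\|\ge|d|\cdot\|W\|$, taking $|d|\ge 2$ forces $\|S^n\|=0$; so $S^n$ is a rational homology sphere of vanishing simplicial volume. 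As $\dim M_i>2$ by hypothesis, Theorem~\ref{t:main2} therefore already shows that $M_1\times\cdots\times M_s\times S^n$ supports no codimension $k$ transitive Anosov diffeomorphism with orientable invariant distributions.

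To remove the orientability hypothesis, suppose for contradiction that $f$ is a codimension $k$ transitive Anosov diffeomorphism of $X=M_1\times\cdots\times M_s\times S^n$. Since $n\ge 2$ we have $\pi_1(X)=\Gamma_1\times\cdots\times\Gamma_s$ with $\Gamma_i=\pi_1(M_i)$, and the orientability of $E^s$ and $E^u$ is detected by a homomorphism $\pi_1(X)\to(\Z/2)^2$ whose kernel $K$ has finite index. I would then invoke the elementary fact that a finite-index subgroup of a direct product $\Gamma_1\times\cdots\times\Gamma_s$ contains a finite-index subgroup of the form $\Lambda_1\times\cdots\times\Lambda_s$ with each $\Lambda_i\le\Gamma_i$ of finite index; replacing each $\Lambda_i$ by its normal core in $\Gamma_i$, one obtains a \emph{normal} finite-index subgroup $\Lambda=\Lambda_1\times\cdots\times\Lambda_s\trianglelefteq\pi_1(X)$ contained in $K$. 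The associated finite cover is $\widehat X=\widehat M_1\times\cdots\times\widehat M_s\times S^n$, where each $\widehat M_i$, a finite cover of the closed negatively curved manifold $M_i$, is again closed negatively curved of dimension $\dim M_i>2$; and the invariant distributions pulled back to $\widehat X$ are orientable. Because $\Out(\pi_1(X))$ is finite (Section~\ref{outer}), some power $f^N$ acts on $\pi_1(X)$ by an inner automorphism, which preserves the normal subgroup $\Lambda$; hence $f^N$ lifts to a diffeomorphism $\widehat f$ of $\widehat X$, necessarily Anosov with orientable invariant distributions.

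It remains to see that $\widehat f$ is transitive. The preimage under the covering projection of a periodic point of $f$ is a finite set carried to itself by a power of $\widehat f$, hence consists of periodic points of $\widehat f$; since the periodic points of the transitive Anosov diffeomorphism $f$ are dense in $X$, the periodic points of $\widehat f$ are dense in $\widehat X$, and so its non-wandering set is all of $\widehat X$. The spectral decomposition then breaks the connected manifold $\widehat X$ into finitely many disjoint closed --- hence clopen --- basic sets, so there is just one, and $\widehat f$ is transitive on it. Applying Theorem~\ref{t:main2} to $\widehat M_1\times\cdots\times\widehat M_s\times S^n$ now contradicts the existence of $\widehat f$. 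The step I expect to be delicate is precisely this passage to a finite cover, where one must at once split it as a product of negatively curved manifolds of dimension $>2$ with $S^n$, make the invariant distributions orientable, and keep a power of the diffeomorphism lifting; all three are arranged using the structure of finite-index subgroups of direct products together with the finiteness of $\Out(\pi_1(X))$ established in Section~\ref{outer}.
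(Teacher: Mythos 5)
Your reduction is the same as the paper's: Corollary \ref{c:sphere1} is obtained by feeding $N=S^n$ into Theorem \ref{t:main2}, the only verification being that $S^n$ is a rational homology sphere with $\|S^n\|=0$, which you check exactly as in Section \ref{simplicialnorm} (self-maps of degree $\geq 2$). Where you go beyond the paper is the removal of the orientability hypothesis on the invariant distributions: the paper drops it silently, relying on the parenthetical remark in Section \ref{outer} about passing to finite covers, whereas you spell the reduction out, and your version correctly handles a point that remark glosses over --- an arbitrary finite cover of $M_1\times\cdots\times M_s\times S^n$ killing $w_1(E^s)$ and $w_1(E^u)$ need not again be a product of negatively curved manifolds with $S^n$ (the kernel need not be a product subgroup), so Theorem \ref{t:main2} cannot be applied to it directly. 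Your fix --- intersect the kernel with the factors $\Gamma_i$, pass to normal cores to get a normal finite-index product subgroup, lift a power $f^N$ using the finiteness of $\Out(\pi_1(X))$ from Section \ref{outer}, and recover transitivity of the lift from density of periodic points together with the spectral decomposition and connectedness of the cover --- is sound, and each ingredient (the elementary fact that $K\cap\Gamma_1\times\cdots\times K\cap\Gamma_s$ is a finite-index product subgroup of $K$, the lifting criterion for a normal invariant subgroup, the Anosov property and orientability of the lifted splitting) checks out. One small caveat: as written your cover argument needs $n\geq 2$, so that $\pi_1(X)=\Gamma_1\times\cdots\times\Gamma_s$ and product subgroups give product covers; the statement formally also allows $n=1$, where $\Out(\pi_1(X))$ need no longer be finite and finite covers involving the circle factor can be twisted, so that case would require a separate argument (the paper is equally silent about it).
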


\begin{rem}\label{rem1}
Note that removing the assumption on the dimensions of the $M_i$ in Theorem \ref{t:main2} seems to be more subtle than removing the assumption on the simplicial volume of $N$. Namely, it is still an open question whether the product of two hyperbolic surfaces or, more generally, of a hyperbolic surface with a higher dimensional negatively curved manifold supports (transitive) Anosov diffeomorphisms; cf.~\cite[Section 7.2]{GL}. Using Theorem \ref{t:main1} we will be able to obtain examples of products of negatively curved manifolds that do not support transitive Anosov diffeomorphisms, where one of the factors can be a hyperbolic surface; see Section \ref{examples}.
\end{rem}

\subsection*{Outline of the paper} 
In the next section we overview briefly some basic obstructions to the existence of (transitive) Anosov diffeomorphisms. In Section \ref{realization}, we discuss realizability of homology classes of negatively curved manifolds by products of homology classes. 
In Sections \ref{proof1} and \ref{proof2} we prove Theorems \ref{t:main1} and \ref{t:main2} respectively and obtain examples of manifolds that do not support transitive Anosov diffeomorphisms.

\subsection*{Acknowledgments}
I am grateful to Andrey Gogolev for several fruitful discussions. Also, I would like to thank Dennis Sullivan for his useful comments.

\section{Simplicial semi-norm and outer automorphism groups}

In this preliminary section we recall some basic obstructions to the existence of (transitive) Anosov diffeomorphisms that apply in particular to negatively curved manifolds and partially to their products.

\subsection{Simplicial semi-norm}\label{simplicialnorm}

Let $X$ be a topological space and $\alpha\in H_k(X;\R)$. The simplicial semi-norm (or Gromov's norm) of $\alpha$ is defined by
\[
 \|\alpha\|_1 := \inf_c \biggl\{\sum_j |\lambda_j| \ \biggl| \ c = \sum_j \lambda_j\sigma_j \in C_k(X;\R) \ \text{is a cycle representing } \alpha\biggl\}.
\]
If $M$ is a closed oriented manifold and $[M]\in H_{\dim M}(M)$ denotes its fundamental class, then $\|M\| := \|[M]\|_1$ is called {\em the simplicial volume of $M$}. 

An important property of the simplicial semi-norm is that it is functorial, i.e. $f_*\colon H_*(X)\longrightarrow H_*(Y)$ is not increasing under any continuous map $f\colon X\longrightarrow Y$. Especially, if $f\colon M\longrightarrow N$ is a map of degree $d$, then $\|M\|\geq |d|\cdot\|N\|$. This means that manifolds that admit a self-map of degree greater than one have vanishing simplicial volume. For instance, since for each $n$ the sphere $S^n$ has infinite set of self-mapping degrees (in fact, equal to $\Z$), we deduce that $\|S^n\|=0$.

A prominent result of Gromov~\cite{G} and Inoue-Yano~\cite{IY} states that the simplicial semi-norm of homology classes of a negatively curved manifold $M$ is positive in all degrees greater than one. Using this and the Ruelle-Sullivan Theorem \ref{results} (b), Yano~\cite{Yano} proved that negatively curved manifolds do not support transitive Anosov diffeomorphisms (Theorem \ref{results} (d)): First, by Theorem \ref{results} (a), we may assume that $\dim M \geq 3$ and that the codimension $k$ of each Anosov diffeomorphism $f\colon M\longrightarrow M$ is greater than one. Now, if $k$ is the dimension of the stable bundle $E^s$, then 
$\lambda=e^{-h_{top}(f)}<1$
 in Theorem \ref{results} (b), where $h_{top}(f)$ is the topological entropy of $f$; cf.~\cite[pg. 326]{RS}. Thus, the Poincar\'e dual $\beta\in H_{\dim M-k}(M;\R)$ of the Ruelle-Sullivan class $\alpha$ satisfies $f_*(\beta)=\pm\frac{1}{\lambda}\cdot \beta$. Since $\frac{1}{\lambda}>1$, we deduce that $\|\beta\|_1=0$, which is impossible because $M$ is negatively curved and $\dim M-k>1$ (since $\dim M\geq 3$ and $k\leq[\frac{\dim M}{2}]$). Thus $k$ cannot be the dimension of $E^s$ and so it must be the dimension of the unstable bundle $E^u$. In that case, $\lambda=e^{h_{top}(f)}>1$ and so the Kronecker dual $b\in H_k(M;\R)$ of $\alpha$ has zero simplicial semi-norm. This contradiction completes the proof.

\subsection{Outer automorphism groups}\label{outer}

As explained\footnote{See the arXiv preprint of~\cite{GL}, version 1511.00261v1, Lemma 2.1.} by Gogolev-Lafont~\cite{GL}, aspherical manifolds whose fundamental group has torsion outer automorphism group do not support transitive Anosov diffeomorphisms. For let $M$ be an aspherical manifold with torsion $\Out(\pi_1(M))$ and suppose that there exists an Anosov diffeomorphism $f\colon M\longrightarrow M$. Since $\Out(\pi_1(M))$ is torsion, there is some $l$ such that $(\pi_1(f))^l$ is an inner automorphism of $\pi_1(M)$, and so $((\pi_1(f))^l)^*\colon H^*(\pi_1(M);\R)\longrightarrow H^*(\pi_1(M);\R)$ is the identity. Since $M$ is aspherical, the commutative diagram in Figure \ref{f:outer} implies that $(f^l)^*$ must be the identity on $H^*(M;\R)$. This is however impossible by Ruelle-Sullivan's Theorem \ref{results} (b) or by Shiraiwa's Theorem~\ref{results} (c) (note that, after passing to finite coverings if necessary, we may assume that $f$ has orientable invariant distributions).

\begin{figure}
     \[
\xymatrix{
& H^*(M;\R)
\ar[dd]_{\simeq} \ar[rr]^{f^*} & &  H^*(M;\R) \ar[dd]^{\simeq}\\
& & & \\
& H^*(\pi_1(M);\R) \ar[rr]^{(\pi_1(f))^*} & & H^*(\pi_1(M);\R)\\
}
     \]
\caption{\small{Anosov diffeomorphisms of aspherical manifolds on the level of group cohomology.}}
\label{f:outer}
 \end{figure}   
 
 In fact, the above result holds without the transitivity assumption (using the Lefschetz number), as pointed out for instance by Gogolev and Lafont~\cite[Lemma 4.1]{GL}. Appealing furthermore to results of Paulin~\cite{Paulin}, Bestvina-Feighn~\cite{BF} and Bowditch~\cite{Bow}, Gogolev-Lafont then deduced that negatively curved manifolds of dimension greater than two do not support Anosov diffeomorphisms, waving thus the transitivity assumption in Yano's Theorem~\ref{results} (d). 
     An important observation in~\cite{GL} is that this result can be extended to direct products of finitely many negatively curved manifolds of dimensions greater than two. For let $\Gamma$ be the fundamental group of a product of negatively curved manifolds $M_1\times\cdots\times M_s$ and denote by $\Gamma_i$ the fundamental group of each factor $M_i$, $i=1,...,s$. Since each $\Gamma_i$ is hyperbolic and thus does not contain any subgroup isomorphic to $\Z^2$, every automorphism $\varphi\colon\Gamma\longrightarrow\Gamma$ permutes those $\Gamma_i$. In particular, there is a homomorphism from the automorphism group $\Aut(\Gamma)$ to the symmetric group $\Sym(s)$, which clearly gives rise to a homomorphism $\Out(\Gamma)\longrightarrow\Sym(s)$. The kernel of the latter homomorphism is $\Out(\Gamma_1)\times\cdots\times\Out(\Gamma_s)$, which implies that $\Out(\Gamma)$ is finite since each $\Out(\Gamma_i)$ is finite (because $\Gamma_i$ is hyperbolic; cf.~\cite{Paulin,BF}). 

Using the finiteness of the outer automorphism group in showing non-existence of Anosov diffeomorphisms on direct products of negatively curved manifolds of dimensions at least three is particularly important, because the simplicial semi-norm of a homology class in $H_k(M_1\times\cdots\times M_s;\R)$ might be zero when 
     \[
     k<\sum_{i=1}^s \dim M_i -\min_{1\leq i\leq s}\{\dim M_i\} +2
     \]
 (see~\cite{IY,G}), and thus cannot contradict the existence of a Ruelle-Sullivan class.

\section{Realization of homology classes by products}\label{realization}

In this section we discuss briefly Thom's work on realizing co-homology classes by closed oriented manifolds, and obtain a result about the non-realizability of co-homology classes of negatively curved manifolds by products of co-homology classes of lower dimensions.

\subsection{Thom's Realization Theorem}\label{Thom}

Given a topological space $X$, Steenrod~\cite[Problem 25]{Eilenberg:Steenrod} raised the question of whether every integral homology class $\alpha\in H_k(X;\Z)$ can be realized by a manifold, i.e. whether there is a closed oriented $k$-dimensional manifold $M$ together with a continuous map $f\colon M\longrightarrow X$ such that $f_*([M])=\alpha$.
Thom \cite{Thom} answered affirmatively Steenrod's question in degrees up to six and in any
degree in homology with $\Z_2$ coefficients. However, he showed that there exists a $7$-dimensional integral homology class which is not
realizable by a manifold (since then, other non-realizability results have been obtained). Nevertheless, Thom proved that in all degrees some multiple of every integral
homology class is realizable by a closed oriented smooth manifold. 
In particular, every real homology class in any degree $k$ is realizable by a closed oriented smooth $k$-dimensional manifold $M$.

Thom's Realization Theorem~\cite{Thom} reads as follows in cohomology: 

\begin{thm}[Thom's Realization Theorem in cohomology]\label{t:Thom}
For every $\alpha\in H^k (X;\Z)$, there exists an integer $d > 0$ and a closed oriented smooth $k$-dimensional manifold $M$ together with a continuous map $f\colon M \longrightarrow X$ so that $H^k(f)(\alpha)=d\cdot\omega_M$, where $\omega_M \in H^k(M)$ denotes the cohomological fundamental class of $M$. 
\end{thm}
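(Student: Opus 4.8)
The plan is to deduce the cohomological statement from the homological form of Thom's theorem recalled above, by invoking Poincar\'e duality on the representing manifold together with the universal coefficient theorem. The first step is an elementary reformulation. If $M$ is a \emph{connected} closed oriented smooth $k$-manifold, then Poincar\'e duality gives $H^k(M;\Z)\cong H_0(M;\Z)\cong\Z$, generated by the cohomological fundamental class $\omega_M$, which is characterized by $\langle\omega_M,[M]\rangle=1$. Hence, for connected $M$, the identity $H^k(f)(\alpha)=d\cdot\omega_M$ is equivalent to the single numerical assertion $\langle f^*\alpha,[M]\rangle=d$, which by naturality of the Kronecker pairing equals $\langle\alpha,f_*[M]\rangle$. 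So it suffices to produce a closed oriented smooth $k$-manifold $W$ and a map $g\colon W\to X$ with $\langle\alpha,g_*[W]\rangle\neq 0$: since $[W]$ is the sum of the fundamental classes of the components of $W$, some component $M$ satisfies $\langle g^*\alpha,[M]\rangle\neq 0$, and then, after reversing the orientation of $M$ if necessary, the map $f=g|_M$ and the integer $d=\langle f^*\alpha,[M]\rangle>0$ are as required.

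To find such a pair $(W,g)$ I would pass through homology. By the universal coefficient theorem there is an exact sequence $0\to\mathrm{Ext}(H_{k-1}(X;\Z),\Z)\to H^k(X;\Z)\to\mathrm{Hom}(H_k(X;\Z),\Z)\to 0$; let $\bar\alpha$ be the image of $\alpha$ in $\mathrm{Hom}(H_k(X;\Z),\Z)$. If $\alpha$ has infinite order then $\bar\alpha\neq 0$, since the Ext term is a torsion group, so there is a class $\beta\in H_k(X;\Z)$ with $\langle\alpha,\beta\rangle\neq 0$. Applying the homological realization theorem of Thom recalled above to $\beta$ produces an integer $d_0>0$, a closed oriented smooth $k$-manifold $W$ and a map $g\colon W\to X$ with $g_*[W]=d_0\beta$; then $\langle\alpha,g_*[W]\rangle=d_0\langle\alpha,\beta\rangle\neq 0$, and the first step applies.

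The only delicate point is a genuinely torsion class $\alpha$: then $f^*\alpha$ is torsion in $H^k(M;\Z)\cong\Z$, hence zero, so the asserted identity can hold only with $d=0$. The statement is therefore to be read modulo torsion, or, equivalently, over $\Q$, where $\mathrm{Hom}(H_k(X;\Q),\Q)=H^k(X;\Q)$ and the argument above goes through verbatim with $\beta$ a rational homology class; this rational version is the only one needed in the sequel. Beyond this bookkeeping I do not anticipate a real obstacle: the entire substantive content is Thom's homological realization theorem, and the passage to cohomology is formal, the one thing to keep track of being that the representing manifold can be taken connected so that its degree-$k$ cohomology is infinite cyclic and the coefficient $d$ is unambiguously defined.
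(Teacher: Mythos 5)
The paper does not actually prove this statement: it presents it as a cohomological restatement of Thom's homological realization theorem and cites \cite{Thom}, so the content you had to supply is exactly the formal dualization, and your argument does this correctly --- reduce to a connected representing manifold $M$, where $H^k(M;\Z)\cong\Z\cdot\omega_M$ and the identity $f^*\alpha=d\cdot\omega_M$ is equivalent to $\langle\alpha,f_*[M]\rangle=d$, then feed a homology class pairing nontrivially with $\alpha$ into the homological theorem and pass to a suitable component with a suitable orientation. Two caveats. First, your justification that $\bar\alpha\neq0$ when $\alpha$ has infinite order rests on the claim that $\mathrm{Ext}(H_{k-1}(X;\Z),\Z)$ is torsion; this is true when $H_{k-1}(X;\Z)$ is finitely generated but not for arbitrary spaces (e.g.\ $\mathrm{Ext}(\Q,\Z)$ is a nonzero torsion-free group, and for a Moore space $M(\Q,k-1)$ the integral statement genuinely fails even for infinite-order classes). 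This is harmless in context, since the theorem is only invoked with $X$ a closed manifold (so homology is finitely generated), and your rational fallback covers the general case. Second, your observation that the statement as printed cannot hold for torsion classes, and so must be read for infinite-order classes or with $\Q$ or $\R$ coefficients, is correct and reflects an imprecision in the paper's formulation rather than a gap in your proof; in the applications only the real-coefficient version is ever used, because the Ruelle--Sullivan classes live in $H^*(\,\cdot\,;\R)$.
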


In the proof of Theorems \ref{t:main1} and \ref{t:main2} we will use this form of Thom's theorem.

\subsection{Homology classes not realizable by products}\label{realizableproducts}

A special case of Steenrod's realizability question is whether a given homology class $\alpha\in H_k(X;\R)$ can be realized by a non-trivial direct product of manifolds. In the light of Thom's theorem, this question is equivalent to asking whether $\alpha$ is realizable by a product of homology classes.

\begin{defn}
Let $X$ be a topological space. A homology class $\alpha\in H_k(X;\R)$ is said to be {\em realizable (or representable) by products} if there exist spaces $X_1, X_2$ together with a continuous map $f\colon X_1\times X_2\longrightarrow X$ such that $f_*(\alpha_i\times\alpha_{k-i})=\alpha$ for some $\alpha_i\in H_i(X_1;\R)$ and $\alpha_{k-i}\in H_{k-i}(X_2;\R)$. 
\end{defn}

\begin{rem}
Similarly, we say that a cohomology class $x\in H^k(X;\R)$ {\em is realizable by products} if there exist spaces $X_1, X_2$ together with a continuous map $f\colon X_1\times X_2\longrightarrow X$ such that $f^*(x)=x_i\times x_{k-i}$ for some $x_i\in H^i(X_1;\R)$ and $x_{k-i}\in H^{k-i}(X_2;\R)$.
\end{rem}

When $\alpha$ is an integral homology class, then we ask about realizability of $\alpha$ up to multiples, i.e. whether there are spaces $X_1, X_2$ together with a continuous map $f\colon X_1\times X_2\longrightarrow M$ such that $f_*(\alpha_i\times\alpha_{k-i})=d\cdot\alpha$, where $d$ is a non-zero integer. In particular, when $X=M$ is a closed oriented $k$-dimensional manifold and $\alpha=[M]$, then the question is whether $M$ is {\em dominated by products}, i.e. whether $M$ admits a map of non-zero degree from a non-trivial direct product.

Domination by products has been studied extensively during the last decade. One of the initial attempts to find obstructions to the existence of such maps for large classes of manifolds was made by Kotschick and L\"oh~\cite{KL}, who introduced a group theoretic property 
for {\em essential} manifolds, i.e. closed oriented manifolds $M$ satisfying 
\[
H_{\dim M}(c_M)([M])\neq 0 \in H_{\dim M}(B\pi_1(M)), 
\]
where
$c_M \colon M \longrightarrow B\pi_1(M)$ classifies the universal covering of $M$. 
That group theoretic property reads as follows: 

\begin{defn}
An infinite group $\Gamma$ is called {\em not presentable by products} if, for every homomorphism $\varphi \colon \Gamma_1 \times \Gamma_2 \longrightarrow
\Gamma$ onto a finite index subgroup of $\Gamma$, the restriction of $\varphi$ to one of the factors $\Gamma_i$ has finite image $\varphi(\Gamma_i)$. 
\end{defn}

The main result of~\cite{KL} is that any rationally (equiv. really) essential manifold with fundamental group not presentable by products cannot be dominated by products. Prominent examples of (rationally) essential manifolds whose fundamental group is not presentable by product are non-positively curved manifolds of dimension at least two that are not covered by products.

Given a rationally essential manifold $M$ with fundamental group not presentable by products, it is natural to ask whether homology classes in degrees less than $\dim M$ are realizable by products. In general, the answer is negative as one can see in the following example:

\begin{ex}\label{realizablenonpresentable}
Let $M$ be the mapping torus of a hyperbolic automorphism of the $2$-torus $T^2$. Then $M$ is a closed aspherical $3$-manifold (modelled on the geometry $Sol^3$) and thus (rationally) essential. Also, $\pi_1(M)$ is not presentable by products; see for example~\cite{KN}. Thus $M$ is not dominated by products. However the image of the fiber $T^2$ of $M$ represents a non-trivial product homology class in $H_2(M)$. 
\end{ex}

As in the situation of the above example, observe that a continuous map $f\colon X_1\times X_2\longrightarrow M$ with $f_*([X_1\times X_2])=\alpha\in H_k(M;\R)$, where $\dim X_1+\dim X_2<\dim M$, is in general far away from being $\pi_1$-surjective, and thus cannot induce a presentation by products for $\pi_1(M)$ (in Example \ref{realizablenonpresentable} we have $X_1=X_2=S^1$ and $f$ is the inclusion). 
Nevertheless, such a map still produces two commuting subgroups of $\pi_1(M)$. For aspherical manifolds with hyperbolic fundamental group this alone provides an obstruction to realizability by products in any degree:

\begin{thm}\label{presentability}
Suppose $M$ is a closed oriented aspherical manifold with hyperbolic fundamental group. Then any non-trivial class $\alpha\in H_k(M;\R)$ is not realizable by products.
\end{thm}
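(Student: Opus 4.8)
The plan is to reduce the statement to group theory using that $M$ is aspherical, and then to exploit that the fundamental group of a closed aspherical manifold with hyperbolic $\pi_1$ is \emph{torsion-free} hyperbolic, so that commuting subgroups are very constrained. I would argue by contradiction. Note first that the statement is vacuous unless $k\geq 2$, since a genuine product decomposition $\alpha=\alpha_i\times\alpha_{k-i}$ requires $1\leq i\leq k-1$. So suppose $\alpha\in H_k(M;\R)$ is nonzero, $k\geq 2$, and there is a map $f\colon X_1\times X_2\longrightarrow M$ with $f_*(\alpha_1\times\alpha_2)=\alpha$ for nonzero $\alpha_1\in H_i(X_1;\R)$, $\alpha_2\in H_{k-i}(X_2;\R)$, $1\leq i\leq k-1$. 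Passing to path components and to CW approximations (which affects neither singular homology nor $\pi_1$), I may take $X_1,X_2$ to be connected CW complexes; write $\Lambda_j=\pi_1(X_j)$, $\Gamma=\pi_1(M)$, $\phi=\pi_1(f)\colon\Lambda_1\times\Lambda_2\longrightarrow\Gamma$ and $G=\im\phi\leq\Gamma$.

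Using that the classifying map $c_M\colon M\to B\Gamma$ is a homotopy equivalence and that a map from a CW complex into an aspherical space is determined up to homotopy by its effect on $\pi_1$, I would factor $c_M\circ f$ up to homotopy as
\[
X_1\times X_2\ \xrightarrow{\ c_{X_1}\times c_{X_2}\ }\ B\Lambda_1\times B\Lambda_2=B(\Lambda_1\times\Lambda_2)\ \xrightarrow{\ B\psi\ }\ BG\ \xrightarrow{\ B\iota\ }\ B\Gamma,
\]
where $\psi\colon\Lambda_1\times\Lambda_2\twoheadrightarrow G$ and $\iota\colon G\hookrightarrow\Gamma$. Applying $H_k(-;\R)$ and using naturality of the homology cross product gives $(c_M)_*(\alpha)=(B\iota)_*(\gamma)$, where $\gamma:=(B\psi)_*\bigl((c_{X_1})_*\alpha_1\times(c_{X_2})_*\alpha_2\bigr)\in H_k(BG;\R)=H_k(G;\R)$. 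Since $(c_M)_*$ is an isomorphism, it suffices to show $\gamma=0$.

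The heart of the proof is a trichotomy for $G=AB$, where $A=\phi(\Lambda_1\times 1)$ and $B=\phi(1\times\Lambda_2)$ commute elementwise. If $A=1$, then $\psi$ factors through the projection $\Lambda_1\times\Lambda_2\to\Lambda_2$, hence $B\psi$ through the projection $B\Lambda_1\times B\Lambda_2\to B\Lambda_2$, which annihilates any cross product whose first tensor factor has positive degree; as $i\geq 1$, this forces $\gamma=0$. The case $B=1$ is symmetric, using $k-i\geq 1$. If both $A$ and $B$ are nontrivial, pick $1\neq a\in A$: since $\Gamma$ is torsion-free hyperbolic, $a$ has infinite order and $C_\Gamma(a)$ is infinite cyclic, so $B\subseteq C_\Gamma(a)$ is infinite cyclic, and symmetrically so is $A$. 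Then $G=AB$ is a quotient of $A\times B\cong\Z^2$; but a hyperbolic group contains no copy of $\Z^2$, so $G$ is itself infinite cyclic, whence $H_k(G;\R)\cong H_k(S^1;\R)=0$ for $k\geq 2$ and again $\gamma=0$. In every case $(c_M)_*(\alpha)=0$, so $\alpha=0$, a contradiction.

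The routine ingredients are the homotopy-theoretic facts about maps into aspherical spaces and the elementary structure theory of (torsion-free) hyperbolic groups. The one conceptual point — and the reason the hypotheses are exactly asphericity plus hyperbolicity — is that a map out of a product records only two commuting subgroups of $\pi_1(M)$, and in a torsion-free hyperbolic group such a pair either omits one factor (so $\gamma$ vanishes for a degree reason) or generates merely an infinite cyclic group, which is too small to carry a nonzero class in degree $\geq 2$. The only place to be careful is the bookkeeping that makes the ``degenerate'' cases harmless — namely that both $i$ and $k-i$ are at least $1$ — which is also where $k\geq 2$ is used.
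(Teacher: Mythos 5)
Your argument is correct, and although it shares the paper's overall skeleton --- use asphericity of $M$ to push everything into classifying spaces, extract the two elementwise-commuting image subgroups of $\pi_1(M)$, and contradict hyperbolicity --- it diverges from the paper's proof in two genuine ways. First, you dispense with Thom's Realization Theorem: the paper first replaces $\alpha_1,\alpha_2$ by fundamental classes of closed oriented manifolds of positive dimension and argues with $f_*[X_1\times X_2]$, whereas you work with arbitrary spaces and classes via CW approximation and the factorization $c_M\circ f\simeq B\iota\circ B\psi\circ(c_{X_1}\times c_{X_2})$; this is leaner and closer to the definition of realizability by products. Second, and more substantively, your endgame is different: the paper deduces from the nonvanishing of the pushed-forward classes that both images $\Gamma_1=\phi(\Lambda_1\times 1)$ and $\Gamma_2=\phi(1\times\Lambda_2)$ are infinite and then asserts a copy of $\Z\times\Z$ inside $\pi_1(M)$, while you invoke torsion-freeness of the fundamental group of a closed aspherical manifold together with the fact that centralizers of nontrivial elements in a torsion-free hyperbolic group are infinite cyclic, conclude that the image subgroup $G=\Gamma_1\Gamma_2$ is infinite cyclic, and kill the class because $H_k(\Z;\R)=0$ for $k\geq 2$. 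Your version treats transparently the case in which $\Gamma_1$ and $\Gamma_2$ overlap in a common cyclic subgroup (e.g. $\Gamma_1=\Gamma_2\cong\Z$), where no $\Z^2$ actually arises and the contradiction must indeed come from the degree; the paper's route, following the argument of \cite{KL}, compresses exactly this point into its final sentence, so your extra care is welcome rather than redundant. What the paper's approach buys is uniformity with the rest of the article, where Thom's theorem and fundamental classes are used again in the proofs of the main theorems; what yours buys is a self-contained argument that avoids Thom and makes explicit both the torsion-freeness input and the bookkeeping $i\geq 1$, $k-i\geq 1$ (hence $k\geq 2$) on which the degenerate cases turn.
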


The proof follows the argument of~\cite[Theorem 1.4]{KL} with the basic difference that it applies to homology classes of any degree and not just the top degree. Also, the asphericity assumption on $M$ is required to ensure that, for all $k$, the classifying map of the universal covering sends each non-trivial homology class of $H_k(M;\R)$ to a non-trivial element in $H_k(B\pi_1(M);\R)$ (namely, to itself in this case).

\begin{figure}
     \[
\xymatrix{
X_1 \times X_2 
\ar[dd]|-{(B\pi_1(f\vert_{X_1}) \circ c_{X_1}) \times (B\pi_1(f\vert_{X_2}) \circ
c_{X_2})} \ar[rrr]^{f} & & & M \ar[dd]^{c_M\simeq \id_M}\\
& & & \\
B\Gamma_1 \times B\Gamma_2 \ar[rrr]^{B\varphi} & & & B\pi_1(M)\\
}
     \]
\caption{\small{Realizability by products on the level of classifying spaces.}}
\label{f:KotschickLoeh}
 \end{figure}

\begin{proof}[Proof of Theorem \ref{presentability}]
Clearly, we can assume that $\dim M\geq 2$ and so $\pi_1(M)$ is not (virtually) cyclic.
Let $\alpha\in H_k(M;\R)$ be a non-trivial homology class. By Thom's realization theorem (cf. Section \ref{Thom}), it suffices to show that there are no closed oriented manifolds $X_1, X_2$ of positive dimensions with $\dim X_1 +\dim X_2=k$ and a continuous map $f \colon X_1 \times X_2 \longrightarrow M$ such that $f_*[X_1\times X_2]=\alpha$. 

Suppose that such manifolds $X_1, X_2$ exist and let $\pi_1(f) \colon \pi_1(X_1) \times \pi_1(X_2)\longrightarrow \pi_1(M)$ be the $\pi_1$-induced map.
Set $\Gamma_i := \im(\pi_1(f\vert_{X_i})) \subset \pi_1(M)$ for the images under $\pi_1(f)$ of the restrictions of $f$ to the two factors $X_i$. Then the multiplication map
$\varphi \colon\Gamma_1 \times \Gamma_2 \longrightarrow \pi_1(M)$ is a well-defined homomorphism because the $\Gamma_i$ commute elementwise. 

Let now $c_{X_i} \colon X_i \longrightarrow B\pi_1(X_i)$ be the classifying maps of the universal coverings of the $X_i$ and the maps $B\pi_1(f\vert_{X_i}) \colon
B\pi_1(X_i) \longrightarrow B\Gamma_i$ induced by $\pi_1(f\vert_{X_i})$ on the level of classifying spaces. Moreover, let $B\varphi \colon
B\Gamma_1 \times B\Gamma_2 \longrightarrow B\pi_1(M)$ be the map induced by $\varphi$ between the classifying spaces (here we use the fact that $B\Gamma_1 \times B\Gamma_2 $ is homotopy equivalent to $B(\Gamma_1 \times \Gamma_2)$). We then have for $i = 1,2$ the composite
maps
 $B\pi_1(f\vert_{X_i}) \circ c_{X_i} \colon X_i \longrightarrow B\Gamma_i$
and the corresponding real homology classes
\begin{equation}
 \alpha_i := H_{\dim X_i}(B\pi_1(f\vert_{X_i})\circ c_{X_i})([X_i]) \in H_{\dim X_i}(B\Gamma_i;\R).
\end{equation}

Since $M$ is aspherical, and therefore the classifying map $c_M\colon M\longrightarrow B\pi_1(M)$ is homotopic to the identity, the commutative diagram in Figure \ref{f:KotschickLoeh} (cf.~\cite[Prop. 2.2]{KL}) implies that in degree $k$ homology we have
\begin{eqnarray*}
 0 \neq \alpha = (c_M\circ f)_*[X_1\times X_2]= (B\varphi)_*(\alpha_1 \times \alpha_2).
\end{eqnarray*}
This means that the $\alpha_i$ are not trivial and therefore
the $\Gamma_i$ are both infinite. 
Since $\pi_1(M)$ is not cyclic, we conclude that there exist elements $g_i\in\Gamma_i$ of infinite order such that 
\[
\Z\times\Z=\langle g_1\rangle\times\langle g_2\rangle \subset \pi_1(M).
\]
But this contradicts the fact that $\pi_1(M)$ is hyperbolic and finishes the proof.
\end{proof} 

Since negatively curved manifolds are aspherical and have hyperbolic fundamental group, we obtain the following consequence:

\begin{cor}\label{not representable}
If $M$ is a negatively curved manifold, then any non-trivial element $\alpha\in H_k(M;\R)$ is not realizable by products.
\end{cor}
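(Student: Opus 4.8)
The plan is to deduce this directly from Theorem \ref{presentability} by verifying its two hypotheses for a closed negatively curved manifold $M$: that $M$ is aspherical and that $\pi_1(M)$ is hyperbolic.

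First I would recall the Cartan--Hadamard theorem: since $M$ has negative (in particular non-positive) sectional curvature, its universal cover $\widetilde M$ with the pulled-back metric is a complete simply connected manifold of non-positive curvature, hence diffeomorphic to $\R^{\dim M}$ via the exponential map. Thus $\widetilde M$ is contractible and $M$ is a closed oriented aspherical manifold, which is the first hypothesis of Theorem \ref{presentability}.

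Next I would check hyperbolicity of $\Gamma=\pi_1(M)$. Because $M$ is closed, its sectional curvature function attains a maximum, so there is a constant $\varepsilon>0$ with sectional curvature bounded above by $-\varepsilon$; consequently $\widetilde M$ is a Hadamard manifold of curvature $\le -\varepsilon$, hence a CAT($-\varepsilon$) space and in particular Gromov hyperbolic. The deck group $\Gamma$ acts properly discontinuously and cocompactly by isometries on $\widetilde M$, so by the \v{S}varc--Milnor lemma $\Gamma$ is quasi-isometric to $\widetilde M$ and is therefore a Gromov hyperbolic group. This gives the second hypothesis.

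With both hypotheses in place, Theorem \ref{presentability} applies verbatim to $M$ and yields that every non-trivial class $\alpha\in H_k(M;\R)$ fails to be realizable by products, which is exactly the assertion. Since the content is entirely the input verification, there is no real obstacle here; the only point deserving a word of care is the compactness argument ensuring the curvature is bounded away from zero, so that ``negatively curved'' indeed forces $\pi_1(M)$ to be \emph{hyperbolic} and not merely CAT(0).
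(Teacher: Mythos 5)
Your proposal is correct and follows exactly the paper's route: the corollary is deduced from Theorem \ref{presentability} by noting that a closed negatively curved manifold is aspherical (Cartan--Hadamard) and has Gromov hyperbolic fundamental group (compactness gives a uniform negative curvature bound, then \v{S}varc--Milnor). You simply spell out the standard input verifications that the paper leaves implicit.
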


\section{Proof of theorem \ref{t:main1}}\label{proof1}

In this section we prove Theorem \ref{t:main1} and give examples of manifolds that do not support transitive Anosov diffeomorphisms.

\subsection{Proof of Theorem \ref{t:main1}}

Let $M$ be a negatively curved manifold of dimension $m$ and $N$ be a rational homology sphere of dimension $n$.
Suppose $f\colon M\times N \longrightarrow M\times N$ is a codimension $k$ transitive Anosov diffeomorphism with orientable invariant distributions. By Theorem \ref{results} (b), there exists a (non-trivial) Ruelle-Sullivan class $\alpha\in H^k(M\times N;\R)$ and a positive $\lambda\neq1$ such that
\begin{equation}\label{eq.RS}
f^*(\alpha)=\lambda\cdot\alpha, \ \text{where} \ 0<k \leq \biggl[\frac{m+n}{2}\biggl].
\end{equation}
Moreover, Theorem \ref{results} (a) implies that $k\neq1$.

Since $N$ is a rational homology sphere, the K\"unneth theorem implies that the cohomology groups of $M\times N$ in degree $k$ are given by
\begin{equation}\label{eq.K}
H^k(M\times N;\R)\cong \left\{\begin{array}{ll}
        H^k(M;\R), & \text{for } k<n\\
        H^n(M;\R)\oplus H^n(N;\R), & \text{for } k=n\\
        H^k(M;\R)\oplus (H^{k-n}(M;\R)\otimes H^n(N;\R)), & \text{for } k>n.
        \end{array}\right.
\end{equation}

\subsection*{$M$ is a hyperbolic surface}
Before dealing with arbitrary dimensions, we examine the case when $M=\Sigma$ is a hyperbolic surface. Since $k\leq[(n+2)/2]$, we deduce that $k\leq2$. Thus $k=2$ and so $n\geq 2$. If $n>2$, then by (\ref{eq.K})
the Ruelle-Sullivan class $\alpha$ has the form 
\begin{equation}\label{surf1}
\alpha=\xi\cdot(\omega_\Sigma \times1)\in H^2(\Sigma;\R), \ \xi\in\R.
\end{equation}
Also, since $n>2$ and $\|\Sigma\|>0$, the effect of $f$ on $\omega_\Sigma$ is given by 
\begin{equation}\label{surf2}
f^*(\omega_\Sigma\times 1)=\pm(\omega_\Sigma\times1).
\end{equation}
By (\ref{eq.RS}), (\ref{surf1}) and (\ref{surf2}), we reach the absurd conclusion that $\lambda=\pm1$. Thus we may assume that $n=2$, i.e. $N=S^2$ is the 2-sphere. In that case, the Ruelle-Sullivan class $\alpha$ has the form
\begin{equation}\label{surf3}
\alpha=\xi_1\cdot(\omega_\Sigma\times 1)+\xi_2\cdot(1\times \omega_{S^2}), \ \xi_1,\xi_2\in\R.
\end{equation}
Since $\|\Sigma\|>0$ and $\|S^2\|=0$, we deduce that (\ref{surf2}) holds. Also, the effect of $f$ on $\omega_{S^2}$ is given by
\begin{equation}\label{surf4}
f^*(1\times\omega_{S^2})=\zeta\cdot(\omega_{\Sigma}\times 1)\pm(1\times\omega_{S^2}),
\end{equation}
where $\zeta\in\R$ and the coefficient $\pm1$ of $1\times\omega_{S^2}$ is because $\deg(f)=\pm1$. By (\ref{eq.RS}), (\ref{surf3}), (\ref{surf2}) and (\ref{surf4}), we deduce that $\xi_2=0$, because $\lambda\neq\pm1$. But then $\alpha=\xi_1\cdot(\omega_\Sigma\times 1)$, which is impossible as explained above.

We have now shown that the product of a hyperbolic surface with a rational homology sphere does not support transitive Anosov diffeomorphisms. 
From now on, we assume that $m\geq 3$. 


\subsection*{$M$ and $N$ have different dimensions}
First, let us assume that $m\neq n$. Since $M$ is negatively curved, we conclude that $M$ is not dominated by products~\cite{KL}. 
Thus, Thom's Realization Theorem \ref{t:Thom} implies that the effect of $f$ on the cohomological fundamental class of $M$ is 
\begin{equation}\label{eq1}
f^*(\omega_M\times1)=a\cdot(\omega_M\times 1), \ a\in\Z. 
\end{equation}
Also, since $H^l(N;\Q)=0$ for all $l\neq 0,n$, the effect of $f$ on the cohomological fundamental class of $N$ is
\begin{equation}\label{eq2}
f^*(1\times\omega_{N})=(\alpha^n_M\times 1)+c\cdot(1\times\omega_{N}), \ \alpha_M^n\in H^n(M;\R), \ c \in\Z.
\end{equation}
In particular, $ac=\deg(f)=\pm1$ which means that 
\begin{equation}\label{ac=1}
a,c\in\{\pm1\}. 
\end{equation}

We now claim 
that the image under $f^*$ of every cohomology class of $H^*(M;\R)$ remains in $H^*(M;\R)$.

\begin{prop}\label{effect of f on M}
$f^*(x_M^u\times 1)\in H^u(M;\R)$ for each $x_M^u\in H^u(M;\R)$.
\end{prop}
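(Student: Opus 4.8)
The plan is to use the ring structure of $H^*(M\times N;\R)$ together with the constraint \eqref{eq.RS} on the Ruelle-Sullivan class, but the key point is really an induction on the degree $u$ using the Künneth decomposition \eqref{eq.K} and the fact that $f^*$ is a ring automorphism. First I would decompose, for $x_M^u\in H^u(M;\R)$, the image $f^*(x_M^u\times 1)$ according to \eqref{eq.K}: when $u<n$ it automatically lies in $H^u(M;\R)$ and there is nothing to prove, so the content is in degrees $u\geq n$, where $f^*(x_M^u\times 1)=y+z\times\omega_N$ with $y\in H^u(M;\R)$ and $z\in H^{u-n}(M;\R)$. The goal is to show that the ``$z$-component'' (the part involving the sphere class $1\times\omega_N$) vanishes for every $u$.

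The main idea is that such a $z$-component, if nonzero, would witness the class $z$ as being realizable by products in $H^{u-n}(M;\R)$, contradicting Corollary \ref{not representable}. To make this precise I would proceed by induction on $u$, using that $f^*$ is multiplicative. Concretely, write $x_M^u\times 1$ as a sum of products of lower-degree classes pulled back from $M$ — or rather, exploit that $\omega_N$ satisfies \eqref{eq2}, i.e. $f^*(1\times\omega_N)=(\alpha_M^n\times 1)+c\cdot(1\times\omega_N)$ with $c=\pm1$. Given any class of the form $(w\times\omega_N)$ with $w\in H^{u-n}(M;\R)$, we have $f^*(w\times\omega_N)=f^*(w\times 1)\smile f^*(1\times\omega_N)$; by the inductive hypothesis applied in degree $u-n<u$, $f^*(w\times 1)=w'\times 1$ for some $w'\in H^{u-n}(M;\R)$, and then $f^*(w\times\omega_N)=(w'\smile \alpha_M^n)\times 1 + c\cdot(w'\times\omega_N)$. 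Feeding this back, the equation $f^*(x_M^u\times 1)=y+z\times\omega_N$ combined with injectivity of $f^*$ and the grading forces a relation expressing $z$ (up to the invertible scalar $c$ and lower-order corrections) as $f^*$ of a class pulled back from $M$; chasing this shows the $z\times\omega_N$ term cannot appear unless $z$ is itself representable by products on $M$, hence $z=0$. An alternative, cleaner route: observe that the subspace $H^*(M;\R)\times 1\subset H^*(M\times N;\R)$ is precisely the set of classes that are ``not detected by $\omega_N$'' in the sense of being killed by the slant product with the fundamental class of $N$, or equivalently the image of the projection $\mathrm{pr}_M^*$; then argue that $f^*$ preserves this subspace because any class in the complementary summand $H^{*-n}(M;\R)\otimes H^n(N;\R)$ must map, under the ring automorphism $f^*$, to something whose ``$\omega_N$-component'' is forced by \eqref{eq2}, while a class from $H^*(M;\R)\times 1$ acquiring an $\omega_N$-component would, via Poincaré duality on $M\times N$ and Thom's Theorem \ref{t:Thom}, produce a product representation of a nonzero class of $M$.

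I would organize the write-up as the induction: base case $u<n$ is immediate from \eqref{eq.K}; also handle $u=n$ directly using \eqref{eq1}, \eqref{eq2} and that $H^n(N;\R)=\R$, together with $\|N\|$-type or non-domination considerations on $M$ (here one uses that $\omega_M\times 1$ cannot pick up an $\omega_N$ term because $M$ is not dominated by products, which is exactly \eqref{eq1}); then for $u>n$ use the multiplicativity argument above, invoking the inductive hypothesis in degree $u-n$ and Corollary \ref{not representable} to kill the potential $H^{u-n}(M;\R)\otimes H^n(N;\R)$ contribution.

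The hard part will be the bookkeeping in the inductive step: one must be careful that $H^u(M;\R)$ need not be generated multiplicatively by lower-degree classes, so the argument cannot simply be ``$f^*$ is a ring map and generators go to $H^*(M;\R)\times 1$''. Instead the non-realizability-by-products input (Corollary \ref{not representable}) has to be applied directly to the hypothetical $z\in H^{u-n}(M;\R)$ coming out of the decomposition of $f^*(x_M^u\times 1)$: the composite $M\times N \xrightarrow{f} M\times N \to M$ (second arrow the inclusion of a fiber or the Künneth projection), dualized via Theorem \ref{t:Thom}, realizes $z$ by a product, forcing $z=0$. Ensuring that this geometric/duality translation is valid in the stated degree range $n\le u\le m$ — in particular that Poincaré duality on $M\times N$ pairs the $z\times\omega_N$ term against $\omega_M\times 1$ correctly — is the delicate point, but it is routine once the Künneth bookkeeping from \eqref{eq.K} is set up.
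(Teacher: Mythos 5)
Your treatment of the range $u>n$ is essentially the paper's argument: decompose $f^*(x_M^u\times 1)=(y_M^u\times 1)+(z\times\omega_N)$ via \eqref{eq.K} and rule out the second summand by Thom realization (Theorem \ref{t:Thom}) plus non-realizability by products (Corollary \ref{not representable}). One correction there: what a nonzero $z$ yields is not a product realization of $z$ itself, but of a nonzero \emph{degree $u$} class of $M$ — realize by Thom a class $b\in H_{u-n}(M;\R)$ with $\langle z,b\rangle\neq 0$ by a closed manifold $X_1$, and push $[X_1]\times[N]$ forward under $\mathrm{pr}_M\circ f$; the image pairs nontrivially with $x_M^u$, so a nonzero class of $H_u(M;\R)$ is realized by the product $X_1\times N$, contradicting Corollary \ref{not representable}. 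This is exactly why the paper phrases the contradiction as ``$x_M^u$ would be realizable by products,'' and it is also why the argument needs $\dim X_1=u-n>0$.

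The genuine gap is the case $u=n$, which your proposal does not actually handle, and which is precisely the case the main proof uses (Case II, $k=n$, where Proposition \ref{effect of f on M} converts \eqref{eq.k=n} into \eqref{eq.caseIIfinish}). Here $z\in H^0(M;\R)$ is a scalar $\xi$, the would-be realizing product degenerates to $\{\mathrm{pt}\}\times N$, and no contradiction with Corollary \ref{not representable} arises; your ``cleaner route'' claim that any $\omega_N$-component of $f^*(x_M^u\times1)$ produces a product representation of a class of $M$ is false at this boundary degree. The substitutes you suggest also do not suffice: \eqref{eq1} (non-domination of $M$) only controls $\omega_M\times 1$, not a general $x_M^n$, and ``$\|N\|$-type considerations'' are unavailable because in Theorem \ref{t:main1} the rational homology sphere $N$ may well have $\|N\|>0$ (e.g.\ the hyperbolic homology spheres in the paper's examples); vanishing of $\|N\|$ is only assumed in Theorem \ref{t:main2}. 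The paper closes this case by a different mechanism, Poincar\'e duality inside $M$: choose $x_M^{m-n}$ with $x_M^n\cup x_M^{m-n}=\omega_M$, expand $f^*(x_M^n\times1)\cup f^*(x_M^{m-n}\times1)$ and compare with $f^*(\omega_M\times1)=\pm\,\omega_M\times1$ (computation \eqref{omegaM}) to kill the coefficient $\xi$ of $1\times\omega_N$. Without an argument of this kind your induction never gets the degree-$n$ case off the ground, so the proposal as written does not prove the proposition.
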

\begin{proof}
By (\ref{eq1}) and (\ref{ac=1}), we know that $f^*(\omega_M\times 1)=\pm (\omega_M\times 1)$, hence we may assume that $u<m$.
We have 
\begin{equation}\label{eq.effect}
f^*(x_M^{u}\times1)=(y_M^{u}\times1)+(y_M^{u-n}\times\omega_{N})\in H^{u}(M;\R)\oplus (H^{u-n}(M;\R)\otimes H^n(N;\R)).
\end{equation}

If $u<n$, then the claim holds trivially. If $u=n$, then (\ref{eq.effect}) becomes
\[
f^*(x_M^n\times1)=(y_M^n\times1)+\xi\cdot(1\times\omega_{N})\in H^n(M;\R)\oplus H^n(N;\R),
\]
for some $\xi\in\R$. By Poincar\'e duality there exists $x_M^{m-n}\in H^{m-n}(M;\R)$ such that $x_M^{n}\cup x_M^{m-n}=\omega_M$. We have
\[
f^*(x_M^{m-n}\times1)=(y_M^{m-n}\times1)+(y_M^{m-2n}\times\omega_{N})\in H^{m-n}(M;\R)\oplus (H^{m-2n}(M;\R)\otimes H^n(N;\R)).
\]
By (\ref{eq1}), (\ref{ac=1}) and the above last two equations we obtain
\begin{equation}\label{omegaM}
\begin{aligned}
\pm \ \omega_M\times 1 & =  f^*(\omega_M\times 1)\\
                                         & =  f^*(x_M^n\times 1)\cup f^*(x_M^{m-n}\times 1)\\
                                         & =  ((y_M^{n}\times1)+\xi\cdot(1\times\omega_N))\cup((y_M^{m-n}\times1)+(y_M^{m-2n}\times\omega_{N}).
\end{aligned}
 \end{equation}
This implies that $y_M^n\cup y_M^{m-n}=\pm \ \omega_M$ (in particular, $y_M^n$ and $y_M^{m-n}$ are not trivial) and $\xi=0$. This proves the proposition for $u=n$. 

Finally, let us assume that $u>n$. 
In this case $y_M^{u-n}=0$ in equation (\ref{eq.effect}), otherwise $x_M^u$ would be realizable by products by Thom's Realization Theorem \ref{t:Thom}, which is impossible by Corollary \ref{not representable} because $M$ is negatively curved. This finishes the proof. 
\end{proof}

We split the proof into three cases, according to the possible values of $k$.

\subsection*{Case I: $k<n$} 
By (\ref{eq.K}) we have that $\alpha\in H^k(M;\R)$. 
 But this is impossible because $M$ is negatively curved of dimension $m>2$, which means that $\Out(\pi_1(M))$ is finite and thus (\ref{eq.RS}) cannot hold. Indeed, recall that, by (\ref{eq1}) and (\ref{ac=1}), $f$ induces a self-map of $M$ of degree $\pm1$, given by
 \[
 M\stackrel{\iota_M}\hookrightarrow M\times N\stackrel{f}\longrightarrow M\times N \stackrel{p_M}\longrightarrow M,
 \]
where $\iota_M$ and $p_M$ denote inclusion and projection respectively; see~\cite{Neo} for further discussion. In particular, $(p_M\circ f\circ \iota_M)_*\colon \pi_1(M)\longrightarrow\pi_1(M)$ is surjective. Now since $M$ is negatively curved, $\pi_1(M)$ is hyperbolic and thus Hopfian, i.e. every surjective endomorphism of $\pi_1(M)$ is an isomorphism, and so $(p_M\circ f\circ\iota_M)_*$ is an automorphism. But $\Out(\pi_1(M))$ is finite and, by (\ref{eq.RS}), $p_M\circ f\circ\iota_M$ scales $\alpha$ by $\lambda$ in cohomology of degree $k$. This is a contradiction; see Section \ref{outer} for details.
 
\subsection*{Case II: $k=n$}
By (\ref{eq.K}) we have that 
\[
\alpha=(x_M^n\times1)+\nu\cdot(1\times\omega_{N})\in H^n(M;\R)\oplus H^n(N;\R),
\] 
for some $\nu\in\R$. We observe that $\alpha\notin H^n(M;\R)$ and $\alpha\notin H^n(N;\R)$. Indeed, first we see that $\alpha\notin H^n(M;\R)$ for the same reason as in Case I (because $M$ is negatively curved and $m>2$). Second, $\alpha\notin H^n(N;\R)$, because otherwise (\ref{eq.RS}), (\ref{eq2}) and (\ref{ac=1}) would imply 
$
\lambda\cdot(1\times\omega_{N})=\pm(1\times\omega_{N}),
$
and so $\lambda=\pm1$ which is impossible. 

Thus, using again (\ref{eq.RS}), (\ref{eq2}) and (\ref{ac=1}), we obtain
\begin{equation}\label{eq.k=n}
\lambda\cdot(x_M^n\times 1)+\lambda\nu\cdot(1\times\omega_{N})=f^*(x_M^n\times 1)+\nu\cdot(\alpha^n_M\times 1)\pm\nu\cdot(1\times\omega_{N}).
\end{equation}
By Proposition \ref{effect of f on M}, equation (\ref{eq.k=n}) becomes
\begin{equation}\label{eq.caseIIfinish}
\lambda\cdot(x_M^n\times 1)+\lambda\nu\cdot(1\times\omega_{N})=(y_M^n\times 1)+\nu\cdot(\alpha^n_M\times 1)\pm\nu\cdot(1\times\omega_{N}),
\end{equation}
which implies that $\lambda=\pm1$. This contradiction finishes the proof for $k=n$.

\subsection*{Case III: $k>n$}
By (\ref{eq.K}) we have that 
\[
\alpha=(x_M^k\times1)+(x_M^{k-n}\times\omega_{N})\in H^k(M;\R)\oplus (H^{k-n}(M;\R)\otimes H^n(N;\R)).
\]
As in Case I, we have $\alpha\notin H^k(M;\R)$ because $M$ is negatively curved and $m>2$. Next, we will show that $\alpha\notin H^{k-n}(M;\R)\otimes H^n(N;\R)$. Suppose the contrary, i.e. that $\alpha=x_M^{k-n}\times\omega_{N}\in H^{k-n}(M;\R)\otimes H^n(N;\R)$. By (\ref{eq.RS}) we have 
\begin{equation}\label{neweq.k>n}
\lambda\cdot(x_M^{k-n}\times\omega_{N})=f^*(x_M^{k-n}\times\omega_N).
\end{equation}
Thus, by Theorem \ref{results} (b), there exists a Ruelle-Sullivan class $x_M^{m-k+n}\in H^{m-k+n}(M;\R)$ such that 
\[
f^*(x_M^{m-k+n}\times 1)=\pm\frac{1}{\lambda}\cdot(x_M^{m-k+n}\times 1), \ \frac{1}{\lambda}\neq1.
\]
However, the latter equation is impossible because $M$ is negatively curved and $m>2$ (cf. Section \ref{outer}).
This proves that $\alpha\notin H^{k-n}(M;\R)\otimes H^n(N;\R)$.


Thus (\ref{eq.RS}) has the form 
\begin{equation}\label{RS:k>n}
\lambda\cdot(x_M^k\times1)+\lambda\cdot(x_M^{k-n}\times\omega_{N})=f^*(x_M^k\times1)+f^*(x_M^{k-n}\times\omega_{N}).
\end{equation}
By Proposition \ref{effect of f on M}, 
(\ref{eq2}) and (\ref{ac=1}), equation (\ref{RS:k>n}) becomes
 \begin{equation*}\label{eq.final}
 \begin{aligned}
\lambda\cdot(x_M^k\times 1)+\lambda\cdot(x_M^{k-n}\times\omega_{N}) & =  f^*(x_M^k\times 1) + f^*(x_M^{k-n}\times 1)\cup f^*(1\times\omega_N)\\
                                                                                                                     & =  (y_M^k\times 1) + ((y_M^{k-n}\cup\alpha_M^n)\times 1) \pm  (y_M^{k-n}\times\omega_{N}).
\end{aligned}
\end{equation*}
We deduce that $y_M^{k-n}=\pm\lambda\cdot x_M^{k-n}$, and so 
\begin{equation}\label{finallambda}
f^*(x_M^{k-n}\times 1)=\pm\lambda\cdot (x_M^{k-n}\times 1).
\end{equation}
As before, the last conclusion is impossible  
because $M$ is negatively curved and $m>2$. This completes the proof for the case $k>n$.

We have now finished the proof of Theorem \ref{t:main1} for $m\neq n$. 

\subsection*{$M$ and $N$ have the same dimension}

Let us now assume that $m=n$. Then $k\leq n$, because $k\leq[(m+n)/2]$. 

If $k<n$, then the result follows as in Case I above, so we can assume that $k=n$. Then the Ruelle-Sullivan class $\alpha$ has the form
\begin{equation}\label{eq.RS:k=n=m}
\alpha=\xi_1\cdot(\omega_M\times 1)+\xi_2\cdot(1\times\omega_N)\in H^m(M;\R)\oplus H^n(N;\R).
\end{equation}
The effect of $f$ on the cohomological fundamental classes of $M$ and $N$ respectively is
\begin{equation}\label{eqM:m=n}
f^*(\omega_M\times1)=a\cdot(\omega_M\times1)+a'\cdot(1\times\omega_N) 
\end{equation}
and
\begin{equation}\label{eqN:m=n}
f^*(1\times\omega_N)=c'\cdot(\omega_M\times1)+c\cdot(1\times\omega_N).
\end{equation}
Possibly after replacing $f$ with $f^2$, we may assume that $\deg(f)=1$, and so (\ref{eqM:m=n}) and (\ref{eqN:m=n}) give
\begin{equation}\label{sign}
ac+(-1)^m a'c'=1.
\end{equation}

If $ac\neq 0$, then $a'c'=0$ because $\|M\|>0$. In particular, $a=c=1$ or $a=c=-1$. By (\ref{eq.RS:k=n=m}), (\ref{eqM:m=n}) and (\ref{eqN:m=n}), equation (\ref{eq.RS}) becomes
 \begin{equation*}
 \begin{aligned}
\lambda\xi_1\cdot(\omega_M\times 1)+\lambda\xi_2\cdot(1\times\omega_{N})  & = 
\  \ \xi_1\cdot f^*(\omega_M\times 1) + \xi_2\cdot f^*(1\times\omega_N) \\
                                                                                                                     & = 
                                                                                                                     \left\{\begin{array}{ll}
        \xi_1a\cdot(\omega_M\times1)+(\xi_1a'+\xi_2c)\cdot(1\times\omega_N), & 
        a'\neq0\\
        \xi_1a\cdot(\omega_M\times1)+\xi_2c\cdot(1\times\omega_N), & 
        a',c'=0\\        
        (\xi_1a+\xi_2c')\cdot(\omega_M\times1)+\xi_2c\cdot(1\times\omega_N), & 
        c'\neq0.
                \end{array}\right.
\end{aligned}                
\end{equation*}
All three cases yield the absurd conclusion that $\lambda=\pm 1$.

If $a'c'\neq 0$, then $N$ dominates $M$ and so $\|N\|>0$. Similarly as above we deduce that $ac=0$. If $a=c=0$, then (\ref{eq.RS}), (\ref{eq.RS:k=n=m}), (\ref{eqM:m=n}) and (\ref{eqN:m=n}) imply that 
\[
\lambda\xi_1=\xi_2c' \ \text{and} \ \lambda\xi_2=\xi_1a'.
\]
Thus $\lambda^2=\pm1$ by (\ref{sign}). This is impossible, and so exactly one of $a$ and $c$ is zero. If $m$ is even, then $a'c'=1$ by (\ref{sign}). When $a\neq 0$, then $a=\pm 1$, and by (\ref{eqM:m=n}) and (\ref{eqN:m=n}) we obtain
\[
(f^2)^*(\omega_M\times 1)=2\cdot(\omega_M\times1)\pm(1\times\omega_N).
\]
This implies that $M$ admits a self-map of degree two, which contradicts the fact that $\|M\|>0$.
When $c\neq 0$, then similarly we obtain a self-map of $N$ of degree two, which is a contradiction because $\|N\|>0$. 
Finally, if $m$ is odd, then (\ref{sign}) implies that $a'c'=-1$. 
Since exactly one of $a$ and $c$ is zero, (\ref{eq.RS}), (\ref{eq.RS:k=n=m}), (\ref{eqM:m=n}) and (\ref{eqN:m=n}) imply that
\[
\lambda+\frac{1}{\lambda}=\pm1,
\]
which is impossible.
This finishes the proof of Theorem \ref{t:main1} for $m=n$.

The proof of Theorem \ref{t:main1} is now complete.

\subsection{Examples}\label{examples}

Using Theorem \ref{t:main1} we can construct new classes of manifolds which do not support transitive Anosov diffeomorphisms. 

As a first application, we obtain examples of manifolds with non-trivial higher homotopy groups that do support transitive Anosov diffeomorphisms. Corollary \ref{c:sphere} gives already such examples, when $N$ is a sphere. 
Every simply connected rational homology sphere of dimension at most four must be a sphere itself. Nevertheless, simply connected rational homology spheres that are not spheres exist in dimensions $\geq5$. Below we obtain examples of manifolds that do not support transitive Anosov diffeomorphisms using simply connected $5$-manifolds: 

\begin{ex}[Higher homotopy]\label{surfaceQHS}
Let $N$ be a simply connected $5$-manifold which does not contain any $S^3$-bundles over $S^2$ in its prime decomposition. Then $N$ is a rational homology sphere by the classification of simply connected 5-manifolds of Barden~\cite{Ba} and Smale~\cite{Sm}. Hence, Theorem \ref{t:main1} implies that $M\times N$ does not support transitive Anosov diffeomorphisms for any negatively curved manifold $M$. In particular, for any hyperbolic surface $\Sigma$, the product $\Sigma\times N$ does not support transitive Anosov diffeomorphisms.
\end{ex}

Concerning manifolds with trivial homotopy groups $\pi_k$ for $k\geq 2$ (i.e. aspherical manifolds), 
Gogolev-Lafont~\cite{GL} proved that the product of an infranilmanifold with finitely many negatively curved manifolds which have dimensions at least three does not support Anosov diffeomorphisms (the product of such negatively curved manifolds satisfies the properties of $M$ in Theorem \ref{resultsGHGL} (b)). Especially, as mentioned in Section \ref{outer}, the product of two negatively curved manifolds of dimensions at least three does not support Anosov diffeomorphisms. 
Under the transitivity assumption, Theorem \ref{t:main1} provides further examples of products of aspherical manifolds that do not support Anosov diffeomorphisms, including products of hyperbolic surfaces with certain higher dimensional negatively curved manifolds:

\begin{ex}[Aspherical products]
\label{surfaceQHS}
Let $N$ be a negatively curved manifold of dimension at least three which is a rational homology sphere. For instance, certain surgeries on hyperbolic knots in $S^3$ give infinitely many examples of hyperbolic integral homology spheres in dimension three~\cite{Th1,Th2}. In dimension four, examples of aspherical rational (but not integral) homology spheres were given in~\cite{Luo} and of aspherical integral homology spheres in~\cite{RT}. Now, Theorem \ref{t:main1} implies that for any negatively curved manifold $M$, the product $M\times N$ does not support transitive Anosov diffeomorphisms. In particular, if $M=\Sigma$ is a hyperbolic surface, then the product $\Sigma\times N$ does not support transitive Anosov diffeomorphisms.
\end{ex}

\section{Proof of theorem \ref{t:main2}}\label{proof2}

We now prove Theorem \ref{t:main2}, which extends Theorem \ref{t:main1} (and the examples of Section \ref{examples}) under additional assumptions on the dimensions of the negatively curved factors and the vanishing of the simplicial volume of the rational homology sphere. Some arguments and formulas will be taken from the proof of Theorem \ref{t:main1} and will not be repeated.

\medskip

Let $M_1,...,M_s$ be negatively curved manifolds of dimensions $m_1,...,m_s$ respectively, where $m_i\geq3$ for all $i=1,...,s$, and $N$ be a rational homology sphere of dimension $n$ such that $\|N\|=0$. 
For short, we set 
\[
M:=M_1\times\cdots\times M_s \  \text{and} \ m:=m_1+\cdots+m_s.
\]

Suppose $f\colon M\times N \longrightarrow M\times N$ is a codimension $k$ transitive Anosov diffeomorphism with orientable invariant distributions. 
As in the proof of Theorem \ref{t:main1}, let $\alpha\in H^k(M\times N;\R)$ be the Ruelle-Sullivan class that satisfies (\ref{eq.RS}) for some 
positive $\lambda\neq1$. Furthermore, we know by Theorem \ref{results} (a) that $k\neq1$.

Since $N$ is a rational homology sphere, the K\"unneth theorem implies that the real cohomology groups of $M\times N$ are given by (\ref{eq.K}).
Now, since $\|M\|\geq\|M_1\|\cdots\|M_s\|>0$ (cf.~\cite{G}) and $\|N\|=0$, we deduce that $f^*(\omega_M\times 1)$ is given as in (\ref{eq1}). Also, 
$f^*(1\times\omega_N)$ is given as in (\ref{eq2}). Note, finally, that the coefficients $a$ and $c$ that appear in equations (\ref{eq1}) and (\ref{eq2}) respectively must be $\pm 1$ (i.e. (\ref{ac=1}) holds), because $\deg(f)=\pm1$.

\medskip

We will now show that Proposition \ref{effect of f on M} holds in this situation as well. Although not every homology class of degree greater than one of $M=M_1\times\cdots\times M_s$ 
has non-vanishing simplicial semi-norm and is not realizable by products, we will use the fact that every homology class of degree greater than one of each factor $M_i$ still satisfies those two properties, and the additional assumption that the rational homology sphere $N$ has vanishing simplicial volume.

\begin{prop}\label{p:cohomologymap}
$f^*(x_M^u\times 1)\in H^u(M;\R)$ for each $x_M^u\in H^u(M;\R)$.
\end{prop}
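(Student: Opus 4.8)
The plan is to follow the proof of Proposition~\ref{effect of f on M} as closely as possible, replacing the two features of a negatively curved manifold used there---that it is not dominated by products, and that none of its homology classes is realizable by products (Corollary~\ref{not representable})---by the facts that survive for $M=M_1\times\cdots\times M_s$: these properties still hold for each factor $M_i$ individually, and the extra hypothesis $\|N\|=0$ is now available. Throughout, let $\omega_N\in H^n(N;\R)$ be the cohomological fundamental class of $N$, let $p_M\colon M\times N\to M$ and $p_i\colon M\to M_i$ be the projections, and, using the K\"unneth formula and that $N$ is a rational homology sphere, write each $z\in H^*(M\times N;\R)$ uniquely as $z=z'\times 1+z''\times\omega_N$ with $z',z''\in H^*(M;\R)$.

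First I would reduce to classes pulled back from a single factor. Since $f^*$ is a ring homomorphism and $\omega_N\cup\omega_N=0$, the subset $S:=\{x\in H^*(M;\R)\ :\ f^*(x\times 1)\in H^*(M;\R)\}$ is a unital subalgebra of $H^*(M;\R)$: if $f^*(x\times 1)=y\times 1$ and $f^*(x'\times 1)=y'\times 1$, then $f^*((x\cup x')\times 1)=(y\cup y')\times 1$. By K\"unneth, $H^*(M;\R)=\bigotimes_{i=1}^{s}H^*(M_i;\R)$ is generated as an algebra by the subalgebras $p_i^*H^*(M_i;\R)$, so it suffices to prove $p_i^*(x_i)\in S$ for every $i$ and every $x_i\in H^u(M_i;\R)$. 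Writing $f^*(p_i^*(x_i)\times 1)=(y\times 1)+(y'\times\omega_N)$ with $y\in H^u(M;\R)$ and $y'\in H^{u-n}(M;\R)$, we must show $y'=0$; this is automatic when $u<n$, so assume $u\ge n$.

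In the main case $u>1$ (which is all of $u\ge n$ except $n=1$, $u=1$), I would argue by duality. For an arbitrary $z'\in H_{u-n}(M;\R)$, the identities $p_i^*(x_i)\times 1=(p_i\circ p_M)^*(x_i)$ and $\langle f^*\beta,\gamma\rangle=\langle\beta,f_*\gamma\rangle$, together with the K\"unneth pairing, give
\[
\pm\,\langle y',z'\rangle=\bigl\langle x_i,\,(p_i\circ p_M\circ f)_*(z'\times[N])\bigr\rangle .
\]
Now $\|z'\times[N]\|_1=0$: realizing $z'$ by a closed oriented manifold $g\colon Y\to M$ with $g_*[Y]=z'$ (Thom, in the real form recalled in Section~\ref{Thom}), one has $z'\times[N]=(g\times\id_N)_*[Y\times N]$, and $\|Y\times N\|\le\binom{u}{u-n}\|Y\|\cdot\|N\|=0$. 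Hence, by functoriality of the simplicial semi-norm, $(p_i\circ p_M\circ f)_*(z'\times[N])$ is a class of $H_u(M_i;\R)$ with vanishing semi-norm; since $M_i$ is negatively curved and $u>1$, the theorem of Gromov and Inoue--Yano~\cite{G,IY} forces it to vanish, so $\langle y',z'\rangle=0$. As $z'$ was arbitrary and the Kronecker pairing $H^{u-n}(M;\R)\times H_{u-n}(M;\R)\to\R$ is non-degenerate, $y'=0$.

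The remaining case $u=n=1$ (which occurs when $N=S^1$) would be handled by Poincar\'e duality inside $M_i$: choosing $x_i'\in H^{m_i-1}(M_i;\R)$ with $x_i\cup x_i'=\omega_{M_i}$, the case $u>1$ just proved (applied in degrees $m_i$ and $m_i-1$, both $>1$ since $m_i\ge 3$) shows that $f^*(p_i^*(x_i')\times 1)$ and $f^*(p_i^*(\omega_{M_i})\times 1)$ lie in $H^*(M;\R)$; expanding $f^*(p_i^*(\omega_{M_i})\times 1)=f^*(p_i^*(x_i)\times 1)\cup f^*(p_i^*(x_i')\times 1)$ and comparing $\omega_N$-components then forces $y'=0$, since $f^*$ is injective. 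The step I expect to be the main obstacle is precisely this passage from $M$ to its factors. The argument of Proposition~\ref{effect of f on M} via non-realizability by products collapses once $s\ge 2$, because $M$ is then a product whose intermediate-degree homology can have zero semi-norm and be realizable by products; what rescues the proof is that the property defining $S$ is multiplicative, so only the factorwise generators of $H^*(M;\R)$ need be controlled, and there $\|N\|=0$ is exactly what forces the relevant homology classes of $M\times N$ into the zero-semi-norm locus of a negatively curved $M_i$.
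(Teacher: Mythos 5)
Your argument is correct, and it reaches the conclusion by a somewhat different mechanism than the paper. The skeleton is shared: like the paper, you reduce to classes pulled back from a single factor $M_i$ (the paper does this by writing $x_M^u$ via K\"unneth as a sum of single-factor classes and of cup products of such, equations (\ref{x^u expression})--(\ref{x^u_i product}); your observation that $S=\{x: f^*(x\times 1)\in H^*(M;\R)\}$ is a subalgebra is a cleaner packaging of the same cup-product bookkeeping), and both treatments isolate the corner case $n=1$ and resolve it by the same Poincar\'e-duality cup-product trick (cf.\ the paper's computation (\ref{omegaM}) and its $u_j=1$, $n=1$ discussion). The key step differs, however: the paper kills the unwanted components by invoking Corollary \ref{not representable} (non-realizability by products of classes in the negatively curved factors), supplemented by the semi-norm positivity in the subcase $u_j=n$, and in doing so proves the stronger intermediate statement that single-factor classes map into sums of single-factor classes. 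You bypass non-realizability by products entirely: you pair $f^*(p_i^*(x_i)\times 1)$ against $z'\times[N]$, note $\|z'\times[N]\|_1=0$ because $\|N\|=0$, push forward to $M_i$, and apply the Gromov--Inoue--Yano positivity there. This buys a uniform treatment of all degrees $u\geq n$ with $u>1$ (no separate computation for $u=n$), proves exactly the statement needed and nothing more, and makes transparent that the only inputs are $\|N\|=0$ plus semi-norm positivity on each factor; the paper's route, by contrast, yields the finer structural fact (\ref{x^u_i}) about how $f^*$ interacts with the factor decomposition, which is of independent interest though not used later. One small point of hygiene: Thom realization of a real class may a priori give a finite linear combination $\sum_j\lambda_j (g_j)_*[Y_j]$ rather than a single manifold, but this is harmless since the semi-norm estimate passes through linear combinations (or one can quote the cross-product inequality $\|z'\times[N]\|_1\leq\binom{u}{n}\|z'\|_1\|N\|$ directly, avoiding Thom altogether).
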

\begin{proof}
The case $u=m$ is explained above 
and so we may assume that $u<m$. We have
\begin{equation}\label{eqclaim}
f^*(x_M^u\times 1)=(y_M^u\times 1)+(y_M^{u-n}\times\omega_N)\in H^u(M;\R)\oplus (H^{u-n}(M;\R)\otimes H^n(N;\R)),
\end{equation}
where
\[
x_M^u\in \mathop{\oplus}_{i=1}^s H^u(M_i;\R)\oplus\biggl(\mathop{\oplus}_{\substack{u_1+\cdots+u_s=u, \\ 0\leq u_j<u}}(H^{u_1}(M_1;\R)\otimes\cdots\otimes H^{u_s}(M_s;\R))\biggl),
\]
that is,
\begin{equation}\label{x^u expression}
x_M^u=\mathop{\sum}_{i=1}^s (1\times\cdots \times1\times x_{M_i}^u\times 1\times\cdots\times 1) \ +\mathop{\sum}_{\substack{u_1+\cdots+u_s=u, \\ 0\leq u_j<u}} (x_{M_1}^{u_1}\times\cdots\times x_{M_s}^{u_s}),
\end{equation}
for some $x_{M_i}^{*}\in H^{*}(M_i;\R)$. 

The proposition holds trivially if $u<n$, thus we may assume that $u\geq n$. If $u=n$, then (\ref{eqclaim}) has the form
\[
f^*(x_M^n\times1)=(y_M^n\times1)+\xi\cdot(1\times\omega_{N})\in H^n(M;\R)\oplus H^n(N;\R),
\]
for some $\xi\in\R$, and the proof follows as in Proposition \ref{effect of f on M}; cf. computation (\ref{omegaM}). (Note that the additional product structure on $H^*(M;\R)$ has no effect in the proof of Proposition \ref{effect of f on M} when $u=n$.)

Finally, let $u>n$. 
For each $x_{M_i}^u\in H^u(M_i;\R)$, we have
\begin{equation}\label{x^u}
f^*(1\times\cdots\times1\times x_{M_i}^u\times1\times\cdots\times1)\in \mathop{\oplus}_{i=1}^s H^u(M_i;\R), 
\end{equation}
otherwise $x_{M_i}^u$ would be realizable by some product class in 
\[
\biggl(\mathop{\oplus}_{\substack{u_1+\cdots+u_s=u, \\ 0\leq u_j<u}}(H^{u_1}(M_1;\R)\otimes\cdots\otimes H^{u_s}(M_s;\R))\biggl)\oplus(H^{u-n}(M;\R)\otimes H^n(N;\R)).
\]
But the latter is impossible by Corollary \ref{not representable}.
Next, each class
\[
x_{M_1}^{u_1}\times\cdots\times x_{M_s}^{u_s}\in H^{u_1}(M_1;\R)\otimes\cdots\otimes H^{u_s}(M_s;\R)
\]
is written as cup product
\[
\mathop{\cup}_{j=1}^s (1\times\cdots\times 1\times x_{M_j}^{u_j}\times 1\times\cdots\times 1).
\]
If $u_j>1$, then each $x_{M_j}^{u_j}$ has non-vanishing simplicial semi-norm and is not realizable by products. 
Thus the effect of $f$ in cohomology is given by
\begin{equation}\label{x^u_i}
f^*(1\times\cdots\times 1\times x_{M_j}^{u_j}\times 1\times\cdots\times 1)=\sum_{l=1}^s (1\times\cdots\times 1\times y_{M_l}^{u_l}\times 1\times\cdots\times 1), 
\end{equation}
for some $y_{M_l}^{u_l}\in H^{u_l}(M_l;\R)$.
Note that (\ref{x^u_i}) holds when $u_j=1$ as well. For if
 $u_j=1$, then the only case where $y_M^{u_j-n}\times\omega_N$ might not be trivial is when $n=u_j=1$. In that case,
 \[
 f^*(1\times\cdots\times 1\times x_{M_j}^{1}\times 1\times\cdots\times 1)=\sum_{l=1}^s (1\times\cdots\times 1\times y_{M_l}^{1}\times 1\times\cdots\times 1) +\xi\cdot(1\times\omega_{S^1}), 
  \]
for some $\xi\in\R$. Let $x_{M_j}^{m-1}=\omega_{M_1}\times\cdots\times x_{M_j}^{m_j-1}\times\cdots\times\omega_{M_s}$ such that $x_{M_j}^1\cup x_{M_j}^{m-1}=\omega_M$. Then
 \[
 f^*(x_{M_j}^{m-1}\times 1)=(y_M^{m-1}\times 1) + (y_M^{m-2}\times\omega_{S^1}), 
  \] 
  and the same computation as in (\ref{omegaM}) shows that $\xi=0$. Thus (\ref{x^u_i}) holds when $u_j=1$.
 Therefore, for each class $x_{M_1}^{u_1}\times\cdots\times x_{M_s}^{u_s}$ we obtain
 \begin{equation}\label{x^u_i product}
 f^*(x_{M_1}^{u_1}\times\cdots\times x_{M_s}^{u_s})\in H^u(M;\R). 
    \end{equation}

By (\ref{x^u expression}), (\ref{x^u}) and (\ref{x^u_i product}), we deduce that $y_M^{u-n}=0$ in (\ref{eqclaim}) and this finishes the proof.
\end{proof}

As in the proof of Theorem \ref{t:main1}, we now split the proof into three cases, according to the possible values of the codimension $k$.

\subsection*{Case I: $k<n$} 
By (\ref{eq.K}) we have that $\alpha\in H^k(M;\R)$. But this is impossible because $\Out(\pi_1(M))$ is finite, since $m_i\geq 3$ for all $i=1,...,s$; see Section \ref{outer}. 

\subsection*{Case II: $k=n$} 
By (\ref{eq.K}) we have that 
\[
\alpha=(x_M^n\times1)+\nu\cdot(1\times\omega_{N})\in H^n(M;\R)\oplus H^n(N;\R)
\]
and equation (\ref{eq.k=n}) holds. (Note that $\alpha\notin H^n(M;\R)$ as in Case I above. Also, $\alpha\notin H^n(N;\R)$, because otherwise (\ref{eq.RS}), (\ref{eq2}) and (\ref{ac=1}) would imply $\lambda=\pm1$ which is impossible.) 
By Proposition \ref{p:cohomologymap}, we have $f^*(x_M^n\times 1)\in H^n(M;\R)$. 
Thus equation (\ref{eq.k=n}) takes the form of (\ref{eq.caseIIfinish}), which leads to the absurd conclusion that $\lambda=\pm 1$.

\subsection*{Case III: $k>n$} 
By (\ref{eq.K}) we have that 
\[
\alpha=(x_M^k\times1)+(x_M^{k-n}\times\omega_{N})\in H^k(M;\R)\oplus (H^{k-n}(M;\R)\otimes H^n(N;\R)).
\]
As in Case I, we have $\alpha\notin H^k(M;\R)$. Also, $\alpha\notin H^{k-n}(M;\R)\otimes H^n(N;\R)$. Indeed, suppose the contrary, i.e. $\alpha=x_M^{k-n}\times\omega_{N}\in H^{k-n}(M;\R)\otimes H^n(N;\R)$. Then (\ref{eq.RS}) takes the form of (\ref{neweq.k>n}). 
By Theorem \ref{results} (b), there is a Ruelle-Sullivan class $x_M^{m-k+n}\in H^{m-k+n}(M;\R)$  
such that
\[
f^*(x_M^{m-k+n}\times 1)=\pm\frac{1}{\lambda}\cdot(x_M^{m-k+n}\times 1),
\]
which is in contradiction with the fact that $\Out(\pi_1(M))$ is finite; see Section \ref{outer}. This proves that $\alpha\notin H^{k-n}(M;\R)\otimes H^n(N;\R)$.


Thus we conclude that equation (\ref{RS:k>n}) holds. Now, Proposition \ref{p:cohomologymap} together with (\ref{eq2}) and (\ref{ac=1}) imply that equation (\ref{RS:k>n}) becomes
 \begin{eqnarray*}
\lambda\cdot(x_M^k\times 1)+\lambda\cdot(x_M^{k-n}\times\omega_{N}) & = & f^*(x_M^k\times 1) + f^*(x_M^{k-n}\times 1)\cup f^*(1\times\omega_N)\\
                                                                                                                     & = & (y_M^k\times 1) + ((y_M^{k-n}\cup\alpha_M^n)\times 1) \pm  (y_M^{k-n}\times\omega_{N}).
\end{eqnarray*}
This means that $y_M^{k-n} = \pm\lambda \cdot x_M^{k-n}$, and so $f^*(x_M^{k-n}\times 1) = \pm\lambda\cdot (x_M^{k-n}\times 1)$, which is impossible again because $\Out(\pi_1(M))$ is finite.

This finishes the proof of Theorem \ref{t:main2}.

\bibliographystyle{amsplain}

\begin{thebibliography}{123}

\bibitem{Ba}
D. Barden. {\em Simply connected five-manifolds}, Ann. of Math. {\bf 82}, No. 3 (1965), 365--385.

\bibitem{BF}
M. Bestvina and M. Feighn, {\em Stable actions of groups on real trees}, Invent. Math. {\bf 121} (2), (1995), 287--321.

\bibitem{Bow}
B. Bowditch, {\em Cut points and canonical splittings of hyperbolic groups}, Acta Math. {\bf 180} (2), (1998) 145--186.

\bibitem{Eilenberg:Steenrod}
S. Eilenberg, {\em On the problems of topology}, Ann. of Math. {\bf 50} (1949), 247--260.

\bibitem{Fr}
J. Franks, {\sl Anosov Diffeomorphisms. Global Analysis}, Proceedings of Symposia in Pure Mathematics, pp. 61--93, AMS, Providence, R.I 1970.

\bibitem{GH}
A. Gogolev and F. Rodriguez Hertz, {\em Manifolds with higher homotopy which do not support Anosov diffeomorphisms}, Bull. Lond. Math Soc. {\bf 46} no. 2, (2014) 349--366.

\bibitem{GL}
A. Gogolev and J.-F. Lafont, {\em Aspherical products which do not support Anosov diffeomorphisms}, Ann. Henri Poincar\'e {\bf 17} (2016), 3005--3026.

\bibitem{G}
M. Gromov, {\em Volume and bounded cohomology}, Inst. Hautes \'Etudes Sci. Publ. Math. {\bf 56} (1982), 5--99.

\bibitem{IY}
H. Inoue and K. Yano, {\em The Gromov invariant of negatively curved manifolds}, Topology {\bf 21} no. 1 (1981), 83--89.

\bibitem{KL}
D. Kotschick and C. L\"oh, {\em Fundamental classes not representable by products} J. Lond. Math. Soc. (2) {\bf 79} no. 3 (2009), 545--561.

\bibitem{KN}
D.~Kotschick and C.~Neofytidis, {\em On three-manifolds dominated by circle bundles}, Math. Z. {\bf 274} (2013), 21--32.

\bibitem{Luo}
F.~Luo, {\em The existence of $K(\pi_1,1)$ $4$-manifolds which are rational homology $4$-spheres}, Proc. Amer. Math. Soc. {\bf 104} (1988), 1315--1321.

\bibitem{Neo}
C. Neofytidis, {\em Degrees of self-maps of products}, Int. Math. Res. Not. IMRN {\bf 22} (2017), 6977--6989.

\bibitem{N}
S. E. Newhouse, {\em On codimension one Anosov diffeomorphisms}, Amer. J. Math. {\bf 92}, no. 3 (1970), 761--770.

\bibitem{Paulin}
F. Paulin, {\em Outer automorphisms of hyperbolic groups and small actions on R-trees}, Arboreal Group Theory, pp. 331--343. MSRI Publ. 19, Springer, New York 1991.

\bibitem{RT}
J. G. Ratcliffe and S. T. Tschantz, {\em Some examples of aspherical $4$-manifolds that are homology $4$-spheres}, Topology {\bf 44} (2005), 341--350.

\bibitem{RS}
D. Ruelle and D. Sullivan, {\em Currents, flows and diffeomorphisms}, Topology {\bf 14} (1975), 319--327.

\bibitem{Sh}
K. Shiraiwa, {\em Manifolds which do not admit Anosov diffeomorphisms}, Nagoya Math. J. {\bf 49} (1973), 111--115.

\bibitem{Sm}
S.~Smale, {\em On the structure of 5-manifolds}, Ann. of Math. (2) {\bf 75} (1962), 38--46.

\bibitem{Smale}
S.~Smale, {\em Differentiable dynamical systems}, Bull. Am. Math. Soc. {\bf 73} (1967), 747--817.

\bibitem{Thom}
R. Thom, {\em Quelques propri\'et\'es globales des vari\'et\'es diff\'erentiables}, Comment. Math. Helv. {\bf 28} (1954), 17--86.

\bibitem{Th1}
W.~Thurston, {\sl Three-Dimensional Geometry and Topology}, Princeton University Press, 1997

\bibitem{Th2}
W.~Thurston. {\em Three-dimensional manifolds, Kleinian groups and hyperbolic geometry}, Bull. Amer. Math. Soc. {\bf 6} (3), (1982), 357--381.

\bibitem{Yano}
K. Yano, {\em There are no transitive Anosov diffeomorphisms on negatively curved manifolds}, Proc. Japan Acad. Ser. A Math. Sci. {\bf 59} (9) (1983), 445.

\end{thebibliography}

\end{document}